\numberwithin{equation}{section}
\font\script=rsfs10 at 11pt
\def\eps{\varepsilon}
\def\H{{\mbox{\script H}\,\,}}
\def\L{{\mbox{\script L}\,\,}}
\def\E{\mathcal E}
\def\M{\mathcal M}
\def\P{\mathcal P}
\def\R{\mathbb R}
\def\S{\mathbb S}
\def\bal{\begin{aligned}}
\def\eal{\end{aligned}}
\def\proofof#1{\begin{proof}[Proof of #1]}
\def\Chi#1{\hbox{{\large $\chi$}{\Large $_{_{#1}}$}}}
\def\step#1#2{\par\noindent{\underline{\it Step~#1.}}\emph{ #2}\\}
\def\XXint#1#2#3{{\setbox0=\hbox{$#1{#2#3}{\int}$} \vcenter{\vspace{-1pt}\hbox{$#2#3$}}\kern-.5\wd0}}
\def\XXiint#1#2#3{{\setbox0=\hbox{$#1{#2#3}{\iint}$} \vcenter{\vspace{-1pt}\hbox{$#2#3$}}\kern-0.5\wd0}}
\def\spt{{\rm spt}}
\newcommand{\norm}[1]{\left\lVert#1\right\rVert}
\newcommand{\weakstar}{\overset{\ast}{\rightharpoonup}}
\newcommand{\scal}[2]{\langle #1, #2 \rangle}
\DeclareMathOperator{\loc}{loc}
\DeclareMathOperator{\dist}{dist}
\newcommand{\res}{\mathop{\hbox{\vrule height 7pt width .5pt depth 0pt
\vrule height .5pt width 6pt depth 0pt}}\nolimits}
\newcommand{\smallres}{\mathop{\hbox{\vrule height 5pt width .5pt depth 1pt
\vrule height -.5pt width 4pt depth 1pt}}\nolimits}
\newcommand\restr[2]{\ensuremath{\left.#1\right|_{#2}}}
\newcounter{mt}
\def\maintheoremdeclaration#1{\stepcounter{mt}\newcounter{#1}\setcounter{#1}{\arabic{mt}}}
\newtheorem{theorem}{Theorem}[section]
\newtheorem{lemma}[theorem]{Lemma}
\newtheorem{prop}[theorem]{Proposition}
\newtheorem{corol}[theorem]{Corollary}
\newtheorem{defin}[theorem]{Definition}
\theoremstyle{remark}
\newtheorem{remark}[theorem]{Remark}
\begin{document}

\title[Minimizing sets for non-local energies]{On the existence of minimizing sets for a weakly-repulsive non-local energy}

\author{D. Carazzato}
\address{Davide Carazzato\\ Scuola Normale Superiore, Pisa (Italy)}
\email{davide.carazzato@sns.it}

\author{A. Pratelli}
\address{Aldo Pratelli\\ Department of Mathematics, University of Pisa (Italy)}
\email{aldo.pratelli@unipi.it}

\author{I. Topaloglu}
\address{Ihsan Topaloglu\\ Virginia Commonwealth University, Richmond (USA)}
\email{iatopaloglu@vcu.edu}

\thanks{This is a post-peer-review, pre-copyedit version of an article published in Pure and Applied Analysis. The final
authenticated version is available online at: \url{https://doi.org/10.2140/paa.2024.6.995}.}

\begin{abstract}
We consider a non-local interaction energy over bounded densities of fixed mass $m$. We prove that under certain regularity assumptions on the interaction kernel these energies admit minimizers given by characteristic functions of sets when $m$ is sufficiently small (or even for every $m$, in particular cases). We show that these assumptions are satisfied by particular interaction kernels in power-law form, and give a certain characterization of minimizing sets.
Finally, following a recent result of Davies, Lim and McCann, we give sufficient conditions on the interaction kernel so that the minimizer of the energy over probability measures is given by Dirac masses concentrated on the vertices of a regular $(N+1)$-gon of side length 1 in $\R^N$.
\end{abstract}

\maketitle

\section{Introduction}

Given a radial, interaction kernel $g\in L^1_{\loc}(\R^N)$, one can consider the corresponding energy in different classes. The first natural class is the one of the sets with finite volume, for which the energy is given by
\begin{equation}\label{ourenergy}
\E(E) = \int_E \int_E g(x-y)\,dx\,dy\,.
\end{equation}
A standard relaxation suggests then to consider $L^1$ functions with values in $[0,1]$ (which will be often called ``densities'') in place of sets, for which the energy is
\begin{equation}\label{energy2}
\E(h) =\int_{\R^N}\int_{\R^N} g(x-y)h(x)h(y)\,dx\,dy\,.
\end{equation}
A further relaxation consists in directly considering finite positive measures $\M^+$, and defining 
\begin{equation}\label{eq:energy}
\E(\mu) = \iint g(x-y)\,d\mu(x)\,d\mu(y)\,.
\end{equation}
The corresponding minimization problems are then, for any given $m>0$,
\begin{gather}
\min\Bigl\{ \E(E) \colon E\subseteq\R^N,\, |E|=m \Bigr\}\,,\label{eq:problem-with-sets}\\
\min\Bigl\{\E(h) \colon h\in L^1(\R^N), \ 0\leq h\leq 1, \ \norm{h}_1=m \Bigr\}\,,\label{eq:problem-with-densities}\\
\min\Bigl\{ \E(\mu) \colon \mu\in\M^+(\R^N),\, \|\mu\|_\M=1\Bigr\}\,.\label{eq:problem-with-measures}
\end{gather}
Of course, each problem extends the previous ones, since for any set $E\subseteq\R^N$ one has $\E(E)=\E(\Chi{E})$ and $|E|=\norm{\Chi{E}}_1$, and similarly for any function $h:\R^N\to [0,1]$ one has $\E(h) = \E(h\L)$ and $\norm h_1=\norm{h\L}_\M$, where $\L$ denotes the Lebesgue measure in $\R^N$. These minimization problems and the relation between them -- often for a specific choice of $g$ -- have been extensively investigated by many authors, with a particular effort in the last decade; some references are, for instance,~\cite{BCT,C,CFuP,CS,CFT,F2M3,FL1,FL2,Lop}. As soon as $g$ is lower semicontinuous and $g(x)\to+\infty$ when $|x|\to+\infty$, the existence of a minimizer in~(\ref{eq:problem-with-measures}) follows from compactness arguments (see~\cite{CCP,CP,SST}).

Let us briefly describe some of the known results in a specific, important case, namely, for the attractive-repulsive kernel given in the power-law form $g_1(x)=|x|^\alpha + \frac 1{|x|^\beta}$, where $\alpha>0$ and $0<\beta<N$. First of all, in~\cite{BCT}, it was shown that a set $E$ is a minimizer for~\eqref{eq:problem-with-sets} if and only if its characteristic function $\Chi E$ is a solution of~\eqref{eq:problem-with-densities}. The same holds true also in a more general setting.

If $N-2\leq \beta < N$, in~\cite{CDM} it is shown that the optimal measures are actually $L^\infty$-functions. Recently, in~\cite{CP}, the first two authors extended this result to a wide class of generic interaction kernels, also providing an a priori bound on the $L^\infty$-norm of minimizers.

In addition, if $0<\beta<N-1$, using quantitative rearrangement inequalities Frank and Lieb proved in~\cite{FL2} the existence of a threshold $m_{\rm ball}\in(0,\infty)$ such that the ball with volume $m$ is the only (up to translation) minimizer of~\eqref{eq:problem-with-sets} for $m>m_{\rm ball}$. In the special case of quadratic attraction ($\alpha=2$), Burchard, Choksi and the third author had proven the same result in~\cite{BCT}, by exploiting the convexity of the energy among densities $h$ with fixed center of mass. Consequently, Lopes used a similar argument in~\cite{Lop} to prove that for $2<\alpha\leq 4$ (and any $0<\beta<N$) minimizers of~\eqref{eq:problem-with-densities} are radially symmetric and unique up to translations. Recently, the first author extended the results of~\cite{FL2} and showed that when the interaction kernel is given by $g(x)=|x|^\alpha + \widetilde{g}(x)$, for a wide class of functions $\widetilde{g}(x)$, the unique (up to translations) minimizers of~\eqref{eq:problem-with-densities} are given by the characteristic function of balls when $m$ is sufficiently large (see~\cite{C}). The stability and local minimality of the ball when $0<\beta<N-1$ has also been studied in~\cite{BCT2}. On the other hand, in the small volume regime, the energy~\eqref{eq:problem-with-sets} does not admit a minimizer for $N>2$, $\alpha=2$, $N-2\leq \beta< N$, and for $N=3$, $\alpha>0$, $\beta=1$, as was shown in~\cite{BCT} and~\cite{FL1} respectively. In these cases, the minimizers of~\eqref{eq:problem-with-densities} actually satisfy $h<1$ almost everywhere.\par

In this paper, we are interested in generic interaction kernels which are \emph{weakly repulsive} (at the origin). This means that $g(0)=0$, and $g$ is negative for small distances and positive for larger ones. In particular, while for strongly repulsive kernels, like $g_1$, a measure containing some atom has always infinite energy, for weakly repulsive kernels atomic measures have finite energy, and hence they are possible candidates for the minimization problem~(\ref{eq:problem-with-measures}). This is not just a theoretical possibility; in fact, Carrillo, Figalli and Patacchini showed in~\cite{CFiP} that global minimizers of~\eqref{eq:energy} over probability measures are supported on finitely many points if $g(x)\approx |x|^\beta$ for some $\beta>2$ when $|x|\ll 1$. Here, writing with a small abuse of notation $g(|x|)=g(x)$, by `` $g(x)\approx |x|^\beta$ '' we mean that $g(0)=0$ and $g'(t) t^{1-\beta}\to -C$ as $t\to 0$ for some $C>0$.

An important example of a weakly repulsive kernel is given by
\begin{equation}\label{defprot}
g_2(x) = \frac{|x|^\alpha}\alpha - \frac{|x|^\beta}\beta,\qquad\qquad \alpha>\beta>0\,.
\end{equation}
Dividing by $\alpha$ and $\beta$ clearly makes no difference, but it is convenient so that the minimal interaction is reached at distance $1$. It is possible to apply arguments by Frank and Lieb to $g_2$ and see that again, when $m$ is large enough, the minimizers of~\eqref{eq:problem-with-densities} are characteristic functions of a ball. This kind of kernel has been studied by Davies, Lim and McCann in a series of papers~(\cite{DLM1,DLM2,LM}), and they are able to precisely characterize the solutions of~(\ref{eq:problem-with-measures}) in some cases.

\begin{theorem}[Davies--Lim--McCann, \cite{DLM1,DLM2}]\label{thmDLM}
Let $N\geq 2$, and $g=g_2$ be given by~(\ref{defprot}). If $\beta=2<\alpha<4$, then the unique minimizer of~(\ref{eq:problem-with-measures}), up to rigid motion, is given by the uniform distribution over a sphere, that is, $\mu = c \H^{N-1}\res \partial B_r$ for a suitable choice of $c$ and $r$. If $\alpha\geq 4,\, \beta\geq 2$ and $(\alpha,\beta)\neq (4,2)$, then the unique minimizer is given by a purely atomic measure uniformly distributed over the vertices of the unit regular $(N+1)$-gon $\Delta_N$.
\end{theorem}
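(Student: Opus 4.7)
My plan would treat the two cases separately, but both start from the same preliminaries. Existence of a minimizer $\mu_*$ of~(\ref{eq:problem-with-measures}) follows from the compactness results already cited, since $g_2$ is continuous and $g_2(x)\to+\infty$ as $|x|\to+\infty$. By translation we may take the center of mass of $\mu_*$ at the origin. The kernel $g_2$ is radial, strictly decreasing on $[0,1]$, strictly increasing on $[1,+\infty)$, and attains its global minimum $g_2(1)=\frac1\alpha-\frac1\beta$ at $|x|=1$, so a scaling argument gives a universal bound on $\diam(\spt\mu_*)$: rescaling by $\lambda$ slightly less than 1 strictly decreases the energy whenever the support is too large.

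For the polygon case $\alpha\geq 4$, $\beta\geq 2$, $(\alpha,\beta)\neq(4,2)$, the key step is to show $\mu_*$ is a finite atomic measure. When $\beta>2$ this is exactly the concentration theorem of Carrillo--Figalli--Patacchini~\cite{CFiP}; for the marginal case $\beta=2,\alpha>4$, $g_2$ is only quadratic near the origin, so~\cite{CFiP} does not apply directly and one has to argue by hand, for example via a moment expansion exploiting that $\alpha>4$ makes the quartic or higher moments contribute a strictly convex penalty that forbids diffuse pieces. Once $\mu_*=\sum_{i=1}^k\lambda_i\delta_{x_i}$, using $g_2(0)=0$ and $g_2(x)\geq g_2(1)$ one obtains
\[
\E(\mu_*)=\sum_{i\neq j}\lambda_i\lambda_j g_2(x_i-x_j)\geq\Bigl(\frac1\alpha-\frac1\beta\Bigr)\Bigl(1-\sum_i\lambda_i^2\Bigr)\geq\Bigl(\frac1\alpha-\frac1\beta\Bigr)\frac{k-1}{k},
\]
where the second inequality uses $\frac1\alpha-\frac1\beta<0$ and $\sum_i\lambda_i^2\geq 1/k$. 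Equality in both requires all pairwise distances $|x_i-x_j|=1$ and all weights equal to $1/k$; since the maximum number of equidistant points in $\R^N$ is $N+1$, realized only by the regular simplex $\Delta_N$, the bound is saturated precisely at $\mu=\frac{1}{N+1}\sum_i\delta_{v_i}$.

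For the sphere case $\beta=2$, $2<\alpha<4$, using $\iint|x-y|^2\,d\mu\,d\mu=2\int|x|^2\,d\mu-2|M|^2$ the energy of a centered probability measure simplifies to
\[
\E(\mu)=\frac1\alpha\iint|x-y|^\alpha\,d\mu\,d\mu-\int|x|^2\,d\mu.
\]
The weaker repulsion ($\beta=2$) no longer gives atomic support, so I would instead establish that the support lies on a single sphere $\bdry B_r$ by a layered rearrangement in the radial variable, after which the remaining angular problem is handled by expanding the angular density into spherical harmonics and computing the energy via the Funk--Hecke formula: the condition $2<\alpha<4$ is precisely what makes every nonconstant spherical-harmonic coefficient of the kernel $[2(1-\cos\theta)]^{\alpha/2}$ strictly positive, so the uniform measure is the unique angular minimizer, and optimizing over $r$ fixes the radius.

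The main obstacle in both cases is the rigorous reduction from $\M^+(\R^N)$ to the conjectured class (atomic for the polygon, spherical for the sphere). In particular the marginal regimes near $(\alpha,\beta)=(4,2)$ are delicate precisely because the simplex and the sphere energies coincide there: proving strict inequality just off this threshold requires a sharp quantitative comparison (of Fourier-type on the sphere, or of moment-type for the atomic reduction), and this is where the bulk of the technical work of~\cite{DLM1,DLM2} sits.
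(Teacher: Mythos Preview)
The paper does not prove this statement: Theorem~\ref{thmDLM} is quoted from~\cite{DLM1,DLM2} and used as a black box, so there is no proof in the paper to compare your proposal against.

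That said, your simplex argument contains a genuine gap. Your chain of inequalities gives, for a $k$-atom measure,
\[
\E(\mu_*)\;\geq\;\Bigl(\tfrac1\alpha-\tfrac1\beta\Bigr)\bigl(1-\textstyle\sum_i\lambda_i^2\bigr)\;\geq\;\Bigl(\tfrac1\alpha-\tfrac1\beta\Bigr)\frac{k-1}{k}\,,
\]
but since $\tfrac1\alpha-\tfrac1\beta<0$, the right-hand side is \emph{decreasing} in $k$: the lower bound for a $100$-atom measure is strictly below the simplex energy $(\tfrac1\alpha-\tfrac1\beta)\frac{N}{N+1}$. You have only identified when your bound is saturated, not shown that the minimizer saturates it. Nothing in your argument rules out a measure with many atoms (none of them mutually equidistant, so your equality condition fails) nevertheless achieving energy below the simplex. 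Closing this gap is precisely the substantial content of~\cite{DLM2}: one needs a structural property of $g_2$ in the stated range of $(\alpha,\beta)$ (a form of conditional positive definiteness, proved there via moment/Schur-type arguments) that upgrades the trivial pointwise bound $g_2\geq g_2(1)$ to a sharp global inequality. Your sketch skips exactly this.

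For the sphere case your outline is closer in spirit to~\cite{DLM1}, but the step ``reduce to a single sphere by a layered rearrangement in the radial variable'' is doing all the work and is not a standard rearrangement (the kernel is not monotone). The actual mechanism in~\cite{DLM1} is the positivity of the third radial derivative of the potential of any radial measure when $\beta=2<\alpha<4$ (this is the same ingredient the present paper invokes in the proof of Theorem~\ref{thm:existence-radial}), combined with convexity of $\E$ among centered measures; together these force the support onto a sphere and then the Funk--Hecke analysis you describe finishes the job.
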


The minimizers have been investigated also in dimension $N=1$. In this case, the unique minimizer is given by two equal masses at distance $1$ as soon as $\alpha\geq 3$, $\beta\geq 2$ (see~\cite{DLM2}), while for $2<\alpha<3$, $\beta=2$ the minimizer, which is computed explicitly in~\cite{Fra}, is absolutely continuous and supported on an interval.\medskip

The goal of this paper is to study the question of existence of optimal \emph{sets}, that is, minimizers of~(\ref{eq:problem-with-sets}). First of all, we underline that existence should not be expected in general. Indeed, as said above, relaxation arguments allow to deduce that a minimizing set exists if and only if there is a function minimizing~(\ref{eq:problem-with-densities}) which is a characteristic function, and this is, of course, a peculiar situation. As a matter of fact, in all the results where existence of optimal sets is established, as in the ones described above, the optimal sets are actually balls, and there is not really an argument which provides existence, but rather the existence is simply obtained as a consequence of the optimality of the balls. It is worth noting that there are, in fact, non-local energies for which existence of optimal sets (different from balls) is known. However, they are not of the form~(\ref{ourenergy}), but of the form
\begin{equation}\label{otherform}
\widetilde \E (E) = P(E) + \int_E\int_E \tilde g(y-x)\,dx\,dy\,,
\end{equation}
where $P(E)$ denotes the perimeter of $E$ and $\tilde g$ is a rather general function, the prototype being negative powers of the distance. For energies of this form, existence of optimal sets has been established under different assumptions, see for instance~\cite{FL0,KM1,KM2,NP}. However, the situation is completely different, due to the presence of the perimeter instead of a non-local double integral. This dramatically changes the situation, because the class of sets is no more dense, in the energy sense, in the class of $L^1$ functions (of course, among functions, the perimeter term has to be replaced by the total variation). Hence, it is not true that existence of optimal sets implies that some optimal $L^1$ function must be a characteristic function. As a consequence, existence of optimal sets is not something to be in general unexpected, on the contrary it usually comes as a rather standard application of compactness in BV, the only difficulty being in excluding loss of mass at infinity.\par

Summarizing, up to now in the literature existence of optimal sets was either more or less simple by standard methods, as for energies like~(\ref{otherform}), or it was only obtained in special cases where the optimal sets are actually balls, for energies like~(\ref{ourenergy}). What we are able to do in this paper is to provide a first argument ensuring the existence of optimal sets for some weakly repulsive kernels. That is, we prove that in some cases when an optimal measure is concentrated on a negligible set, an optimal set exists if the mass is small. Even though the precise statement, Theorem~\ref{thm:existence-sets}, has rather technical assumptions, its meaning becomes particularly evident having in mind Theorem~\ref{thmDLM}. Indeed, when the powers ensure that the optimal measure is uniformly distributed over the vertices of the unit, regular $(N+1)$-gon $\Delta_N$, then we get existence of optimal sets for small mass, and these sets are made by $N+1$ disjoint, convex subsets close to the vertices of $\Delta_N$, see Theorem~\ref{thm:existence-power-like}. Instead, when the powers are so that the optimal measure is uniformly distributed over a sphere, then we get existence of optimal sets for \emph{every} mass, and the solutions are always either annuli or balls, see Theorem~\ref{thm:existence-radial}.\par

We remark that the existence of minimal sets for similar energies was also investigated, with different techniques, by Clark in her Ph.D. thesis~\cite{Cla}.

\subsection{Plan of the paper}

The plan of the paper is the following. In Section~\ref{sec:preliminaries} we introduce some notation and some basic results, in particular the existence of minimizers for~\eqref{eq:problem-with-densities} and the corresponding optimality conditions, and an explicit bound on the diameter of their support, see Proposition~\ref{prop:diameter-bound}.

In Section~\ref{sec:small-volume} we prove our main results. Specifically, Theorem~\ref{thm:existence-sets} provides existence of an optimal set for small mass under some technical assumptions on $g$. Then, in Theorem~\ref{thm:existence-power-like}, we observe that this abstract result can be applied for instance in the cases when Theorem~\ref{thmDLM} ensures that the optimal measure is given by atoms in the vertices of $\Delta_N$. The existence of optimal sets is true also when Theorem~\ref{thmDLM} says that the optimal measure is uniformly distributed over a sphere, and this is the content of Theorem~\ref{thm:existence-radial}, which is valid not only for small mass but \emph{for all} $m$ values. Finally, Theorem~\ref{Thm3d} gives some technical conditions under which the minimal measures are concentrated on the vertices of $\Delta_N$. This generalizes the results by Davies, Lim and McCann. In this case the abstract existence result of Theorem~\ref{thm:existence-sets} can be applied again to show that optimal sets exist for small mass. We highlight that the hypotheses of this last theorem are stable with respect to small perturbations, showing that the existence of minimizing sets is not a feature possessed only by some specific kernels.

We conclude by observing that, in the cases considered by Theorem~\ref{thmDLM}, we prove existence of an optimal set for small mass, and as described above existence is also known for large mass, when the optimal set is a ball. This leaves open the question of what happens in the intermediate volume regimes. On one side, as noted above, one should in general not expect existence of optimal sets. But on the other side, at least for the cases when the optimal measure is uniformly distributed over a sphere, our results provide existence of optimal measures also for intermediate masses.

\section{Notation and preliminary results}\label{sec:preliminaries}

This section is devoted to introduce the few notation that we use and to gather a couple of useful results. Through this paper, $g:\R^N\to\R$ denotes a radial, $L^1_{\rm loc}$, lower semicontinuous function. Since $g$ is radial, with a slight abuse of notation we will often write $g(t)=g(x)$ for any $|x|=t$. We use the letter $\E$ to denote the interaction between two densities. This means that, given $h_1,\, h_2\in L^1(\R^N;[0,1])$, we write
\[
\E(h_1,h_2) = \int_{\R^N}\int_{\R^N} g(x-y)h_1(x)h_2(y)\,dx\,dy\,,
\]
so that in particular, according to the notation~(\ref{energy2}), we have $\E(h)=\E(h,h)$. The very same notation is used for the case of two sets, or two measures, extending~\eqref{ourenergy} and~\eqref{eq:energy} respectively. The \emph{potential} of a measure $\mu$ is the function $\psi_\mu:\R^N\to\R$ defined as
\[
\psi_\mu(x) = \int g(x-y) \,d\mu(y)\,,
\]
and given the density $h$ we denote for brevity by $\psi_h$ the potential of the measure $h\L$. The potential is very useful in computing the energy of a measure, in particular of course $\E(\mu) = \int \psi_{\mu}\,d\mu$. Moreover, it naturally appears in the Euler--Lagrange conditions for the minimization problem, thanks to the following standard result (we give a short sketch of the proof, the formal one can be seen for instance in~\cite{CDM} or~\cite{BCT}).
\begin{prop}\label{prop:EL}
Let $g\in L^1_{\loc}(\R^N)$ be a function bounded from below. If $f$ and $\mu$ are minimizers of~(\ref{eq:problem-with-densities}) and~(\ref{eq:problem-with-measures}) respectively, then
\begin{align}\label{eq:EL}\tag{$EL$}
\begin{cases}
\psi_f = \lambda \qquad \L\text{-a.e. in }\{0<f<1\},\\
\psi_f \geq \lambda \qquad \L\text{-a.e. in }\{f=0\},\\
\psi_f \leq \lambda \qquad \L\text{-a.e. in }\{f=1\},
\end{cases}
 &&
\begin{cases}
\psi_{\mu} = \E(\mu)\qquad \mu\text{-a.e.},\\
\psi_{\mu} \geq \E(\mu) \qquad \text{in }\R^N\setminus\spt\mu,
\end{cases}
\end{align}
for some constant $\lambda\in(-\infty,+\infty]$.
\end{prop}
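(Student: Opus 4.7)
The plan is to derive both sets of optimality conditions from first-order variations, using perturbations tailored to each feasible class. For the density problem, given a minimizer $f$ and any bounded $\phi\in L^\infty(\R^N)$ with compact support satisfying $\int\phi\,dx=0$ and $0\le f+t\phi\le 1$ for all small $t>0$, the function $f_t:=f+t\phi$ is admissible. A direct expansion using the symmetry of $g$ gives
\[
\E(f_t)-\E(f)=2t\int \psi_f\,\phi\,dx+t^2\E(\phi),
\]
so minimality forces $\int \psi_f\phi\,dx\ge 0$. Specializing to $\phi=\Chi{A}-\Chi{B}$ with $A\subseteq\{f<1\}$, $B\subseteq\{f>0\}$, and $|A|=|B|$, one gets $\int_A\psi_f\ge \int_B\psi_f$. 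Letting $A$ and $B$ range over all such bounded Borel sets, a standard measure-theoretic argument forces the essential infimum of $\psi_f$ on $\{f<1\}$ to be at least the essential supremum of $\psi_f$ on $\{f>0\}$. Taking $\lambda$ to be the common intermediate value (which is finite whenever $\{0<f<1\}$ has positive measure, otherwise chosen suitably between the two quantities), the three relations in \eqref{eq:EL} follow.

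For the measure problem I would use perturbations $\mu_t:=\mu-t\,\mu\res A+t\,\mu(A)\,\delta_y$, where $A$ is any Borel set with $\mu(A)>0$ and $y\in\R^N$ is arbitrary. For $t\in[0,1]$, $\mu_t$ is a non-negative measure of total mass $1$, hence admissible. Expanding as before,
\[
\E(\mu_t)-\E(\mu)=2t\Bigl[\mu(A)\,\psi_\mu(y)-\int_A\psi_\mu\,d\mu\Bigr]+O(t^2),
\]
so minimality yields $\mu(A)^{-1}\int_A\psi_\mu\,d\mu\le \psi_\mu(y)$. Shrinking $A$ around $\mu$-density points of $\psi_\mu$ (via the Lebesgue differentiation theorem for the measure $\mu$), this becomes $\psi_\mu(x)\le\psi_\mu(y)$ for $\mu$-a.e.\ $x$ and every $y$. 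Choosing $y$ itself $\mu$-a.e.\ and symmetrizing shows that $\psi_\mu$ is $\mu$-a.e.\ equal to a constant $c$; integrating $\int\psi_\mu\,d\mu=\E(\mu)$ against $\mu$ (which has total mass $1$) identifies $c=\E(\mu)$. Finally, the pointwise inequality $\psi_\mu\ge\E(\mu)$ on $\R^N\setminus\spt\mu$ is upgraded from the $\mu$-a.e.\ version by exploiting the lower semicontinuity of $\psi_\mu$, which is inherited from that of $g$ via Fatou's lemma (using the lower bound on $g$ to apply it).

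\textbf{Main obstacle.} The routine part is the first-variation computation; the subtle step is passing from the integral inequality to a pointwise statement in the measure case, which requires a Lebesgue-differentiation argument adapted to the generic positive measure $\mu$ rather than Lebesgue measure, together with a careful identification of $\psi_\mu$-density points of $\mu$. A related difficulty is upgrading the exterior inequality to hold \emph{pointwise} on $\R^N\setminus\spt\mu$; this is where lower semicontinuity of $\psi_\mu$ becomes essential, and one must verify that $g$ being bounded below and lower semicontinuous suffices to transfer the property to $\psi_\mu$. For the density case, the analogous technicality is the verification that the bound obtained for arbitrary $A,B$ really forces the level-set structure, which is a standard but slightly delicate argument when $\{0<f<1\}$ is of positive but not full measure.
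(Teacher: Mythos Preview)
Your approach is essentially the same as the paper's: a first-variation argument with admissible perturbations, leading to the comparison $\psi_f(x)\le\psi_f(y)$ whenever $f(x)>0$ and $f(y)<1$ (the paper states exactly this and says the measure case is ``completely similar'').

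One technical caveat in your measure-case perturbation: writing the remainder as $O(t^2)$ requires the self-energy of $\nu=-\mu\res A+\mu(A)\delta_y$ to be finite, and this fails when $g(0)=+\infty$, since then $\E(\delta_y)=g(0)=+\infty$. The hypotheses of the proposition (only $g\in L^1_{\loc}$, bounded below) do not exclude this. The fix is routine---replace $\delta_y$ by a smeared competitor such as $|B_r(y)|^{-1}\L\res B_r(y)$, whose self-energy is finite because $g\in L^1_{\loc}$---but then passing to the \emph{pointwise} inequality $\psi_\mu(y)\ge\E(\mu)$ from the averaged one is not immediate via lower semicontinuity alone (l.s.c.\ bounds $\psi_\mu(y)$ from \emph{above} by nearby values, which is the wrong direction). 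In the regimes the paper actually uses (continuous or $C^2$ kernels, in particular $g(0)$ finite), your Dirac perturbation works as written and this issue does not arise.
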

\begin{proof}[Sketch of the proof]
We concentrate only on the case of densities since it is the most relevant one in this paper, the other case is completely similar. Given any function $h:\R^N\to[-1,1]$ with compact support and satisfying $\int h=0$ and $0\leq f+h\leq 1$, for any $t\in[0,1]$ we have
\[
\E(f+th) = \E(f)+2t\int \psi_f(x)h(x)\,dx+t^2\E(h)\,.
\]
For any $0\leq t \leq 1$ the function $f+th$ is an admissible competitor in~\eqref{eq:problem-with-densities}, thus by minimality we have $\E(f)\leq \E(f+th)$. Then, also using that $\E(h)<+\infty$ since $\spt h$ is compact, we deduce that $\psi_f(x)\leq \psi_f(y)$ for any two points $x,\,y$ such that $f(x)>0$ and $f(y)<1$. This is stronger than~\eqref{eq:EL}.
\end{proof}

Another standard result is the existence of minimizers for problems~(\ref{eq:problem-with-densities}) and~(\ref{eq:problem-with-measures}). The proof in our general setting can be easily adapted from those already available in the literature, see for instance~\cite{CCP,C,SST}.

\begin{lemma}\label{lemma:existence}
Assume that $g\in L^1_{\loc}(\R^N)$ is bounded from below, lower semicontinuous\,and $\lim_{|x|\to+\infty}g(x)=+\infty$. Then, for any $m>0$ there exist a minimizer of~\eqref{eq:problem-with-densities} and a minimizer of~\eqref{eq:problem-with-measures}.
\end{lemma}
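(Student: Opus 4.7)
The plan is to apply the direct method to both problems. The only non-standard issue is that $\E$ is translation-invariant, so a minimizing sequence need not be tight by itself; I will restore tightness, after suitable translations, via a concentration-compactness argument, using the coercivity hypothesis $g(x)\to+\infty$ to rule out both vanishing and splitting of mass.

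Consider first~(\ref{eq:problem-with-measures}). Let $\{\mu_n\}$ be a minimizing sequence, so $\E(\mu_n)\leq C$. For each $n$, translate $\mu_n$ so that a point almost maximizing $\mu_n(B_1(\cdot))$ lies at the origin, and set $c_n:=\sup_{x\in\R^N} \mu_n(B_1(x))$. \emph{Vanishing} ($c_n\to 0$) is excluded by a covering argument: it would imply $\sup_x \mu_n(B_R(x))\to 0$ for every fixed $R>0$, and since
$$\iint_{|x-y|\leq R} d\mu_n(x)\,d\mu_n(y) = \int \mu_n(B_R(x))\,d\mu_n(x),$$
this forces $\liminf_n \E(\mu_n)\geq \inf_{|z|>R} g(z)$, which tends to $+\infty$ as $R\to\infty$. \emph{Dichotomy} is excluded by Lions' concentration-compactness lemma: a split $\mu_n=\mu_n^1+\mu_n^2+\nu_n$ with $\spt \mu_n^1\subset B_{R_n}$, $\spt\mu_n^2\subset\R^N\setminus B_{R_n'}$, $R_n'-R_n\to+\infty$, and masses converging to $\theta\in(0,1)$, $1-\theta$, and $0$ respectively, would give
$$\E(\mu_n) \geq \E(\mu_n^1)+\E(\mu_n^2)+2\theta(1-\theta)\inf_{|z|\geq R_n'-R_n} g(z)+o(1),$$
whose right-hand side blows up and contradicts the uniform energy bound. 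Hence concentration holds: along a subsequence $\mu_n\weakstar\mu$ in $\M^+(\R^N)$ with $\|\mu\|_\M=1$.

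To conclude, I invoke lower semicontinuity. Since $g$ is lower semicontinuous and bounded below, $g-\inf g$ is the pointwise increasing limit of bounded continuous functions; combined with $\mu_n\otimes\mu_n\weakstar\mu\otimes\mu$, monotone convergence yields
$$\E(\mu)=\iint g\,d(\mu\otimes\mu)\leq \liminf_n\iint g\,d(\mu_n\otimes\mu_n)=\liminf_n\E(\mu_n),$$
so $\mu$ is a minimizer. The density case~(\ref{eq:problem-with-densities}) is handled identically by working with $\mu_n:=h_n\L$ of mass $m$: the same concentration-compactness yields a limit $\mu$ which, since $0\leq h_n\leq 1$ passes to weak-$*$ limits in $L^\infty$, has the form $h\L$ with $0\leq h\leq 1$; tightness forces $\|h\|_1=m$, and semicontinuity gives optimality.

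The main obstacle is the dichotomy step: the decomposition must be set up so that the gap $R_n'-R_n$ genuinely tends to $+\infty$, which is a standard but careful application of Lions' lemma, after which coercivity of $g$ immediately forces the contradiction. The remaining ingredients (weak-$*$ compactness in $\M^+$ and in $L^\infty$, lower semicontinuity of the double integral) are entirely routine, which is presumably why the authors state the lemma without proof.
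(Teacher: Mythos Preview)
Your argument is correct and is precisely the concentration--compactness strategy used in the references~\cite{CCP,SST,C} to which the paper defers (the paper itself gives no proof of this lemma, only the citation). The vanishing and dichotomy steps are handled exactly as you describe, with the coercivity $g(x)\to+\infty$ supplying the contradiction in both cases, and the lower semicontinuity and $L^\infty$ weak-$*$ compactness steps are routine; there is nothing to add.
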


The last result that we present is an a-priori bound on the diameter of the support of a minimizing density, and this deserves a quick comment. When dealing with minimizing measures, the boundedness of the support is a quite standard result, and it has been proved in several different contexts (see for instance~\cite{CCP,CP}). As we have already noticed, for many properties (for instance the existence given by the above lemma) working with measures or with densities does not make much difference. However, the compactness of the support of minimizers is more delicate for the case of densities due to the fact that the Euler--Lagrange condition~(\ref{eq:EL}) for densities has an additional constraint (see \cite{FL1} for the special case when $g$ is given by a power-law of the form $g_1$). As a consequence, the proof of the result below does not follow by a simple generalization of the proofs available for the case of measures. Therefore we provide a complete proof.

\begin{defin}
A radial function $g:\R^N\to\R$ is said \emph{definitively non-decreasing} if there exists some $\overline R\geq 0$ such that $g(s)\geq g(t)$ for every $s\geq t\geq \overline R$.
\end{defin}

\begin{prop}\label{prop:diameter-bound}
Let $g\in L^1_{\rm loc}(R^N)$ be a radial, lower semicontinuous, bounded from below, definitively non-decreasing function such that $\lim_{|x|\to+\infty}g(x)=+\infty$. Then, there is a constant $D=D(g,m)$ such that the diameter of the support of any density minimizing~(\ref{eq:problem-with-densities}) is bounded by $D$. If, in addition, $g$ is locally bounded, then one can take the same constant $D(g,m)$ for every $m\leq 1$.
\end{prop}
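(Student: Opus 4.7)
The plan is to argue by contradiction using the Euler--Lagrange conditions of Proposition~\ref{prop:EL}: if a minimizer $f$ satisfied $\diam(\spt f)>D$ for $D$ arbitrarily large, I would derive two incompatible bounds on the associated Lagrange multiplier $\lambda$.

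The first ingredient is the a priori energy bound $\E(f)\le E_m:=\E(\Chi{B_{r_m}})$, obtained by using as competitor the characteristic function of a ball $B_{r_m}$ of volume $m$. From this I would deduce a quantitative concentration: for every $\eta\in(0,1)$ there exists $R_0=R_0(g,m,\eta)$ such that, up to translation, $\int_{B_{R_0}}f\ge(1-\eta)m$. Indeed, if no translation achieved this for some $R$, Fubini's theorem and $f\le 1$ would give $\iint_{|x-y|>R}f(x)f(y)\,dx\,dy\ge\eta m^2$, and combined with $g\ge -C$ on the ``short-distance'' region and the definitively non-decreasing property on the ``long-distance'' one, this yields $\E(f)\ge\eta m^2\inf_{t\ge R}g(t)-Cm^2\to+\infty$ as $R\to\infty$, contradicting the energy bound. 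I would also enlarge $R_0$ so that $|B_{R_0}|\ge 2m$ and $R_0\ge\overline R$; in particular, $A:=B_{R_0}\cap\{f<1\}$ has measure $|A|\ge m$.

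Assume now $\diam(\spt f)>D$ and choose $x^*\in\spt f$ with $|x^*|\ge D-R_0$. For any $z\in B_{R_0}$ one has $|x^*-z|\ge D-2R_0$, and by monotonicity of $g$ at large distances $g(x^*-z)\ge g(D-2R_0)$; splitting $\psi_f(x^*)$ and using $g\ge -C$ on the complement yields
\[
\psi_f(x^*)\ge g(D-2R_0)(1-\eta)m-C\eta m.
\]
Since $\psi_f=g\ast f$ is lower semicontinuous and $\psi_f\le\lambda$ on $\{f>0\}$ by~\eqref{eq:EL}, we get $\psi_f(x^*)\le\lambda$, hence $\lambda\ge g(D-2R_0)(1-\eta)m-Cm$. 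On the other hand, by~\eqref{eq:EL} $\psi_f\ge\lambda$ on $A$, so Fubini gives
\[
\lambda m\le\int_A\psi_f=\int f(z)\int_A g(y-z)\,dy\,dz\le C_1(g,R_0)\,m+|B_{R_0}|\,\eta\,m\,g(D+R_0),
\]
where for $z$ with $|z|<2R_0$ the inner integral is bounded by $\int_{B_{3R_0}}|g|$ (finite since $g\in L^1_{\loc}$), while for $z\in\spt f$ with $|z|\ge 2R_0$ the monotonicity of $g$ gives $\int_A g(y-z)\,dy\le|B_{R_0}|\,g(D+R_0)$, and $\int_{B_{R_0}^c}f\le\eta m$.

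Combining the two estimates,
\[
g(D-2R_0)(1-\eta)m-Cm\le C_1(g,R_0)\,m+|B_{R_0}|\,\eta\,g(D+R_0),
\]
where $R_0$ and $C_1$ depend only on $g,m,\eta$, not on $D$. Choosing $\eta$ small enough that the left-hand side dominates as $D\to\infty$ (a balancing condition involving the ratio $g(D+R_0)/g(D-2R_0)$) gives the sought contradiction, yielding the bound $D=D(g,m)$. The main obstacle is precisely this final balancing step: it is elementary for kernels of polynomial or bounded-doubling growth, but in general may require choosing $\eta$ (and hence $R_0$) iteratively as a function of $D$. For the second statement, when $g$ is locally bounded and $m\le 1$, all the constants $E_m$, $R_0$, $C_1$, $|B_{R_0}|$ can be taken uniformly in $m\le 1$ (since $r_m\le r_1$ and $g$ is bounded on the fixed compact $B_{3R_0}$), yielding a $D$ depending only on $g$.
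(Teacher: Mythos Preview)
Your strategy via the Euler--Lagrange multiplier is natural, but the ``balancing step'' you flag is a genuine gap, not a technicality. First, a smaller point: in your upper bound for $\lambda$ you write $g(D+R_0)$, tacitly assuming $\spt f\subset B_{D+R_0}$; but you only assumed $\diam(\spt f)>D$, so the support may extend well beyond. The fix is to set $D'=\sup\{|z|:z\in\spt f\}$ and run both bounds with $D'$ in place of $D$. Once you do this, the two inequalities read, schematically,
\[
(1-\eta)\,m\,g(D'-R_0)-C\eta m\ \le\ \lambda\ \le\ C_1(R_0)+|B_{R_0}|\,\eta\,g(D'+R_0),
\]
and a contradiction requires $(1-\eta)m$ to beat $|B_{R_0}|\,\eta\cdot g(D'+R_0)/g(D'-R_0)$. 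Nothing in the hypotheses controls this ratio: for $g(t)=e^{t^2}$ it equals $e^{4R_0 D'}\to\infty$. Since $R_0$ depends on $\eta$, shrinking $\eta$ forces $R_0$ larger and can worsen the ratio, so the ``iterative'' scheme you suggest has no obvious fixed point. Without an extra doubling-type assumption on $g$, the argument does not close; the same obstruction also blocks the $D'=\infty$ case, since bounding $\int_A\psi_f$ then requires comparing $\int f(z)g(|z|+R_0)\,dz$ to $\int f(z)g(|z|-R_0)\,dz$.

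The paper avoids this entirely by abandoning the multiplier and building an explicit competitor. It moves the far mass $f_3=f\,\Chi{\R^N\setminus B_{R^+}}$ into a small conical region $\mathcal C\subset B_{5R}\setminus B_{4R}$ pointing toward a well-chosen $z^+\in\spt f_3$ (a minimizer of $\psi_{f_2}$ over $\spt f_3$), and shows the energy strictly drops. The crucial advantage is that every energy comparison involves $g$ only at \emph{fixed} scales ($6R$, $11R$, $R^+-R$), never a ratio $g(D'+c)/g(D'-c)$ at the unknown scale $D'$. The delicate term is $\E(f_2,\tilde f_3)$; the paper splits the intermediate shell $B_{R^+}\setminus B_R$ into three zones and uses the monotonicity of $g$ together with the minimizing property of $z^+$ to bound $\E(f_2'',\tilde f_3)$ directly by $\E(f_2,f_3)$, so the unknown far geometry cancels. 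This geometric cancellation is exactly the idea your multiplier approach is missing.
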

\begin{proof}
We can assume without loss of generality that $g\geq 0$. Let $f:\R^N\to [0,1]$ be any minimizer of~(\ref{eq:problem-with-densities}). In particular, $\E(f)\leq C_m$, where $C_m=C_m(g,m)$ is the energy of a ball of volume $m$. Then, let us fix a constant $R=R(g,m)$ so that $g$ is non-decreasing on $[R,+\infty)$, the volume of the ball of radius $R$ is bigger than $\kappa m$, where $\kappa=\kappa(N)$ is a purely geometric constant that will be defined later, and
\begin{equation}\label{Rbig}
g(R)> \frac{5C_m}{m^2}\,.
\end{equation}
Let us now call $f_1 = f \Chi{B_R}$ and $\delta=m - \|f_1\|_{L^1}$. Up to a translation, we can assume that $\delta \leq m/5$. Indeed, if this was not true, then for any $x\in\R^N$ the mass of $f$ outside of the ball $B(x,R)$ would be more than $m/5$, and then also by~(\ref{Rbig})
\[\begin{split}
\E(f) &\geq \int_{\R^N} \int_{\R^N\setminus B(x,R)} g(y-x)f(y)f(x)\,dy\,dx
\geq \int_{\R^N} g(R) \bigg( \int_{\R^N\setminus B(x,R)} f(y)\,dy\bigg) f(x)\,dx\\
&\geq \int_{\R^N} \frac m5\, g(R) f(x)\,dx = \frac {m^2}5\, g(R) > C_m\,,
\end{split}\]
against the optimality of $f$. We let now $R^+=R^+(g,m)\geq 50R$ be another constant, so that
\begin{equation}\label{defR+}
g(R^+-R)\geq 2 g(6R) +\frac 5{2m}\, \int_{B_{11R}} g(x)\,dx\,,
\end{equation}
and we aim to prove that $f$ is supported in $B_{R^+}$, so that the proof will be concluded with $D=2R^+$. Let us call $f_2 = f\Chi{B_{R^+}\setminus B_R}$ and $f_3 = f\Chi{\R^N\setminus B_{R^+}}$, so that $f=f_1+f_2+f_3$. Calling now $\eps=\|f_3\|_{L^1}\leq \delta\leq m/5$, our claim can be rewritten as $\eps=0$, thus we assume $\eps>0$ and we look for a contradiction.

Let $z^+$ be a minimizer of the potential $\psi_{f_2}(z)= \int_{\R^N} g(z-y)\, f_2(y)\,dy$ within the support of $f_3$. Notice that such a minimizer exists. Indeed, by assumption the support of $f_3$ is a non-empty closed set, and the above function is either constantly $0$ if $f_2\equiv 0$ (and in such a case any point of the support is a minimizer), or it is a continuous function which explodes for $|z|\to \infty$. The minimality property of $z^+$ ensures that
\begin{equation}\label{firstone}
\E(f_2,f_3) = \int_{\R^N} \psi_{f_2}(z) f_3(z)\, dz \geq \psi_{f_2}(z^+) \|f_3\|_{L^1}=\psi_{f_2}(z^+) \eps\,.
\end{equation}
Let us now define the set
\[
\mathcal C = \bigg\{z\in\R^N:\, 4R\leq |z|\leq 5R, \, \frac{z\cdot z^+}{|z|\cdot |z^+|} \geq \cos(\pi/15) \bigg\}\,,
\]
which is the portion of cone highlighted in Figure~\ref{Fig1}. We call then $\kappa=\omega_N R^N/|\mathcal C|$, which is a purely geometrical constant only depending on $N$. Then, since by construction $|B_R|> \kappa m$, we have $|\mathcal C| = |B_R| /\kappa >m$.
\begin{figure}[htbp]
\begin{tikzpicture}[scale=0.3]
\draw (0,0) circle (2.5);
\draw (-2.2,2) node[anchor=east] {$B_R$};
\fill (1,1.5) circle(4pt);
\draw (1,1.5) node[anchor=north east] {$w$};
\draw (24,0) arc(0:15:20);
\draw (24,0) arc(0:-15:20);
\draw (23.7,3) node[anchor=west] {$B_{R^+}$};
\fill[blue!25!white] (5.5,0) arc(0:12:5.5) -- (7.83,1.66) arc(12:-12:8) -- (5.38,-1.14);
\draw (5.5,0) arc(0:12:5.5) -- (7.83,1.66) arc(12:-12:8) -- (5.38,-1.14);
\draw (5.5,0) arc(0:50:5.5);
\draw (5.5,0) arc(0:-50:5.5);
\draw (8,0) arc(0:40:8);
\draw (8,0) arc(0:-40:8);
\draw (7,3.7) node[anchor=west] {$B_{5R}$};
\draw (3.6,4.5) node[anchor=west] {$B_{4R}$};
\fill (24.5,0) circle(4pt);
\draw (24.5,0) node[anchor=north west] {$z^+$};
\draw[dashed] (12,-5)--(12,5);
\fill (13,1.5) circle(4pt);
\draw (13,1.5) node[anchor=north west] {$y'''$};
\fill (10,-3) circle(4pt);
\draw (10,-3) node[anchor=north west] {$y''$};
\end{tikzpicture}
\caption{The construction in Proposition~\ref{prop:diameter-bound}.}\label{Fig1}
\end{figure}
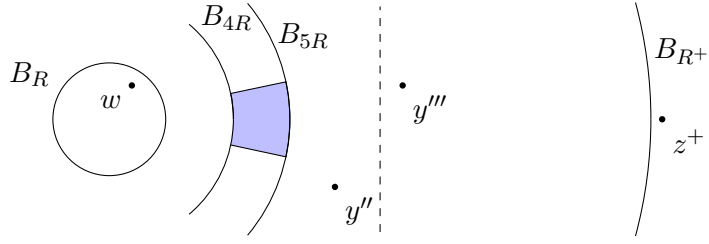
 Since $\|f\|_{L^1(B_{R^+})}=m-\eps$, there exists a positive density $\tilde f_3$, concentrated in $\mathcal C$, such that 
\begin{align*}
\|\tilde f_3\|_{L^1}=\eps\,, && 0\leq \tilde f :=f_1+f_2+\tilde f_3\leq 1\,.
\end{align*}
In particular, the fact that $\tilde f_3$ is concentrated in $\mathcal C$ gives
\begin{equation}\label{almostpar}
\frac{z \cdot z^+}{|z|\cdot |z^+|} \geq \cos(\pi/15) \qquad \forall\, z\in \spt(\tilde f_3)\,.
\end{equation}
We will conclude our proof by showing that $\E(f)>\E(\tilde f)$, which will contradict the minimality of $f$ since by construction $\tilde f$ is a competitor for problem~(\ref{eq:problem-with-densities}). Notice that
\begin{equation}\label{evaldiff}
\E(f) - \E(\tilde f) = 2 \Big(\E(f_1,f_3) - \E(f_1,\tilde f_3)+\E(f_2,f_3) - \E(f_2,\tilde f_3)\Big) + \E(f_3)-\E(\tilde f_3)\,.
\end{equation}
Let us evaluate separately the different pieces. First of all, by construction
\begin{gather*}
\E(f_1,f_3) \geq g(R^+-R) \|f_3\|_{L^1} \|f_1\|_{L^1} = g(R^+-R) \eps (m-\delta)\,,\\
\E(f_1,\tilde f_3) \leq g(6R) \|\tilde f_3\|_{L^1} \|f_1\|_{L^1} = g(6R) \eps (m-\delta)\,,
\end{gather*}
thus by~(\ref{defR+}) and since $\delta\leq m/5$ and~(\ref{Rbig}) we have
\begin{equation}\label{insert1}
\E(f_1,f_3) -\E(f_1,\tilde f_3) \geq \frac 45\, m \eps \bigg( g(6R)+ \frac 5{2m}\, \int_{B_{11R}} g(x)\,dx \bigg)> 4 \eps\,\frac{C_m}m + 2\eps \int_{B_{11R}} g(x)\,dx\,.
\end{equation}
To estimate $\E(f_2,f_3)-\E(f_2,\tilde f_3)$, it is convenient to subdivide $\R^N$ into three pieces. The first one is the ball $H'=B_{6R}$, and the other two are
\begin{align*}
H''= \bigg\{ x \notin H' :\, \frac{x\cdot z^+}{|z^+|} \leq \frac 12 \, R^+\bigg\}\,, && H'''= \bigg\{ x \notin H' :\, \frac{x\cdot z^+}{|z^+|} > \frac 12 \, R^+\bigg\}\,,
\end{align*}
which are respectively on the left and on the right of the dashed hyperplane in the figure. We call then $f_2',\, f_2''$ and $f_2'''$ the restrictions of $f_2$ to $H',\, H''$ and $H'''$, so that $f_2=f_2'+f_2''+f_2'''$. We now observe that
\begin{equation}\label{insert2}
\begin{split}
\E(f_2',\tilde f_3) &= \int_{\R^N}\int_{H'} g(y'-z)\, \tilde f_3(z) f_2(y')\,dy'\,dz
\leq \int_{\R^N} \int_{B_{6R}} g(y'-z) \tilde f_3(z) \,dy'\, dz\\
&\leq \int_{\R^N} \int_{B_{11R}} g(x) \tilde f_3(z) \,dx\, dz = \eps \int_{B_{11R}} g(x)\,dx\,.
\end{split}\end{equation}
Next, we pass to $f_2''$. For any $y''\in H''\cap B_{R^+}$ and $z\in \spt(\tilde f_3)$, by construction and using~(\ref{almostpar}) we have $R< |y''-z|\leq |y''-z^+|$. Since $g$ is non-decreasing on $[R,+\infty)$, also by~(\ref{firstone}) we can evaluate
\begin{equation}\label{insert3}
\begin{split}
\E(f_2'',\tilde f_3) &= \int_{H''}\int_{\R^N} g(y''-z) f_2(y'') \tilde f_3(z) \, dz\,dy''\\
&\leq \int_{\R^N}\int_{\R^N} g(y''-z^+) f_2(y'') \tilde f_3(z) \, dz\,dy''
= \eps \int_{\R^N} g(y''-z^+) f_2(y'') \,dy''\\
&=\eps \psi_{f_2}(z^+) \leq \E(f_2,f_3)\,.
\end{split}\end{equation}
The argument to estimate $\E(f_2''',\tilde f_3)$ is similar. Since for any $w\in B_R$, any $y'''\in H'''\cap B_{R^+}$, and any $z$ in the support of $\tilde f_3$ we have, by construction and an elementary trigonometric calculation, $|y'''-w|\geq |y'''-z|>R$, we evaluate
\[
\E(f_2''',\tilde f_3) = \int_{H'''}\int_{\R^N} g(y'''-z) f_2(y''')\tilde f_3(z)\,dz\,dy'''
\leq \eps\int_{H'''} g(y'''-w) f_2(y''')\,dy'''\,.
\]
Since this is true for every $w\in B_R$, and $f_1$ is concentrated on $B_R$, we obtain
\[\begin{split}
(m-\delta) \E(f_2''',\tilde f_3)&=\int_{B_R} \E(f_2''',\tilde f_3)f_1(w)\,dw
\leq \eps \int_{B_R} \int_{H'''} g(y'''-w) f_2(y''') f_1(w)\,dy'''\,dw\\
&= \eps \,\E(f_1,f_2''') \leq \eps \E(f) \leq \eps\, C_m\,,
\end{split}\]
which since $\delta\leq m/5$ implies
\begin{equation}\label{insert4}
\E(f_2''',\tilde f_3) \leq 2\eps\, \frac{C_m}m\,.
\end{equation}
Putting together~(\ref{insert2}), (\ref{insert3}) and~(\ref{insert4}), we have
\begin{equation}\label{insert5}
\E(f_2,f_3)-\E(f_2,\tilde f_3)> -2\eps\, \frac{C_m}m-\eps \int_{B_{11R}} g(x)\,dx
\end{equation}
Lastly, since the support of $\tilde f_3$ is contained in $\mathcal C$, whose diameter is much smaller than $11R$, we can readily estimate
\begin{equation}\label{insert6}
\E(\tilde f_3)=\E(\tilde f_3,\tilde f_3) \leq \int_{\R^N} \int_{B_{11R}(z)}g(z-y)\tilde f_3(z)\, dy\, dz =\eps \int_{B_{11R}} g(x)\,dx\,.
\end{equation}
Inserting~(\ref{insert1}), (\ref{insert5}) and~(\ref{insert6}) into~(\ref{evaldiff}), and minding also $\E(f_3)\geq 0$, we finally obtain
\[
\E(f) - \E(\tilde f) \geq 4 \eps\,\frac{C_m}m + \eps \int_{B_{11R}} g(x)\,dx>0 \,,
\]
thus the contradiction $\E(f)>\E(\tilde f)$ is established and this concludes the first part of the proof.\par

Assume now that $g$ is locally bounded, and let us notice that a simple modification of the proof provides the same constant $D(g,m)$ for every $m\leq 1$. Notice first that any ball with volume $m\leq 1$ has diameter less than $\omega_N^{-1/N}$, thus $C_m \leq C m^2$, where $C=\sup \{g(x),\, |x|\leq 2\omega_N^{-1/N}\}$. As a consequence, one can take the same radius $R$ in~(\ref{Rbig}) for every $m\leq 1$. The radius $R^+$ defined in~(\ref{defR+}) explodes when $m\searrow 0$, but the local boundedness of $g$ allows for a much simpler definition of $R^+$. More precisely, (\ref{insert2}) can be clearly modified by saying
\[
\E(f_2',\tilde f_3)\leq \frac{\eps m}5 \,g(11R)\,,
\]
and then the proof works with no other modification replacing the definition~(\ref{defR+}) of $R^+$ by
\[
g(R^+-R)\geq 2 g(6R) + g(11R)\,,
\]
which does not depend on $m$. Since $D(g,m)=2R^+$, the proof is concluded.
\end{proof}

\section{Existence of minimal sets with small volume} \label{sec:small-volume}

\subsection{Existence results for general kernels}

A minimizer of the problem~\eqref{eq:problem-with-densities} exists under mild hypotheses on $g$, as we recalled in Lemma~\ref{lemma:existence} (and this can be also found in~\cite{C,CFT}), but, in general, this problem does not necessarily admit a minimizer if considered among the restricted class of the characteristic functions (see e.g.~\cite{BCT,FL1} and~\cite[Remark 3.16]{CP}). Here we show that, under certain rather general conditions on $g$, the solutions to the problem~\eqref{eq:problem-with-densities} are characteristic functions of sets also when $m$ is small enough. First we prove a general statement, and then we specialize further connecting our results to those present in~\cite{DLM1,DLM2}.

\begin{lemma}\label{lemma:convergence}
Let $g\in C(\R^N)$ be definitively non-decreasing such that $\lim_{|x|\to+\infty}g(x)=+\infty$. Let $f_j$ be a minimizer of~\eqref{eq:problem-with-densities} with $\norm{f_j}_1=m_j$ for any sequence $m_j\searrow 0$. Then, up to translations and up to taking a subsequence, $m_j^{-1}f_j\weakstar \mu$ for some $\mu\in\P(\R^N)$ minimizing~(\ref{eq:problem-with-measures}). Moreover, if $\psi_{\mu}>\E(\mu)$ in $\R^N\setminus\spt\, \mu$, then for any $\eps>0$ there is $\bar j$ such that
\begin{equation}\label{incluconv}
\spt f_j\subseteq B_\eps+\spt\, \mu\qquad \forall \, j>\bar j\,.
\end{equation}
\end{lemma}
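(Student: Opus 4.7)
The plan is to extract a weak-* subsequential limit of $\mu_j:=m_j^{-1}f_j\L$, identify it with a minimizer of~\eqref{eq:problem-with-measures}, and then localise the supports via the Euler--Lagrange analysis. Since $g$ is continuous (hence locally bounded) and $m_j\to 0$, Proposition~\ref{prop:diameter-bound} gives a fixed $D$ with $\diam(\spt f_j)\leq D$ for all $j$ large. Translating each $f_j$ individually I may assume $\spt f_j\subseteq \overline{B_D}$, so the $\mu_j$ are probability measures in the common compact set $\overline{B_D}$ and, up to a subsequence, $\mu_j\weakstar\mu\in\P(\R^N)$. The continuity of $g$ on $\overline{B_{2D}}$ gives both $\E(\mu_j)\to\E(\mu)$ (product-measure weak convergence against a bounded continuous function) and $\psi_{\mu_j}\to\psi_\mu$ uniformly on $\overline{B_D}$, with a uniform modulus of continuity inherited from $g$. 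To show $\mu$ minimizes~\eqref{eq:problem-with-measures}, I pick any minimizer $\nu$ of that problem (which has compact support by the standard a priori bound, cf.~\cite{CCP,CP}) and use the mollified competitor $h_j:=m_j(\nu\ast\rho_{\eps_j})$, where $\rho$ is a standard mollifier and $\eps_j\to 0$ is chosen so that $m_j\|\rho_{\eps_j}\|_\infty\leq 1$. The identity
\[
\E(\nu\ast\rho_{\eps_j})=\iint(g\ast\rho_{\eps_j}\ast\rho_{\eps_j})(u-v)\,d\nu(u)\,d\nu(v)
\]
and the uniform convergence $g\ast\rho_{\eps_j}\ast\rho_{\eps_j}\to g$ on $\spt\nu-\spt\nu$ yield $m_j^{-2}\E(h_j)\to\E(\nu)$, so the minimality of $f_j$ in~\eqref{eq:problem-with-densities} forces $\E(\mu)\leq\E(\nu)$.

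For~\eqref{incluconv}, assume $\psi_\mu>\E(\mu)$ off $\spt\mu$. Continuity of $\psi_\mu$ together with $\psi_\mu\to\infty$ at infinity gives, for each $\eps>0$, a $\delta_\eps>0$ with $\psi_\mu\geq\E(\mu)+\delta_\eps$ on $K_\eps:=\{x:\dist(x,\spt\mu)\geq\eps\}$. Writing $\lambda_j$ for the Euler--Lagrange multiplier of $f_j$ from Proposition~\ref{prop:EL} and $\tilde\lambda_j:=\lambda_j/m_j$, the key step is to prove $\tilde\lambda_j\to\E(\mu)$. For the lower bound, any $y_j\in\spt f_j$ satisfies $\psi_{\mu_j}(y_j)\leq\tilde\lambda_j$ (by continuity of $\psi_{f_j}$ and the EL on $\{f_j>0\}$), while $\psi_\mu\geq\E(\mu)$ everywhere (which holds by the EL for $\mu$ together with continuity of $\psi_\mu$) and uniform convergence give $\psi_{\mu_j}(y_j)\geq\E(\mu)-o(1)$. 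For the upper bound I fix $x^*\in\spt\mu$: the open set $A_j:=\{\psi_{f_j}<\lambda_j\}$ is contained in $\{f_j=1\}$ up to a null set, so $|A_j|\leq m_j$, and therefore the ball $B_{r_j}(x^*)$ with $r_j:=2(m_j/\omega_N)^{1/N}\to 0$ is too large to sit inside $A_j$ and contains a point $x^*_j$ with $\psi_{\mu_j}(x^*_j)\geq\tilde\lambda_j$. Since $\psi_\mu(x^*)=\E(\mu)$ and the $\psi_{\mu_j}$ are uniformly equicontinuous with $\psi_{\mu_j}\to\psi_\mu$ uniformly on $\overline{B_D}$, one has $\psi_{\mu_j}(x^*_j)\to\E(\mu)$, hence $\limsup\tilde\lambda_j\leq\E(\mu)$.

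Finally, if~\eqref{incluconv} failed, along a subsequence I could find $x_j\in\spt f_j$ with $\dist(x_j,\spt\mu)>\eps$; the uniform support bound yields $x_j\to x^*\in K_\eps$, so $\psi_\mu(x^*)\geq\E(\mu)+\delta_\eps$, while $\psi_{\mu_j}(x_j)\leq\tilde\lambda_j$ combined with uniform convergence forces $\psi_\mu(x^*)\leq\lim\tilde\lambda_j=\E(\mu)$, a contradiction. The main obstacle is establishing the sharp asymptotic $\tilde\lambda_j\to\E(\mu)$: the lower bound is immediate from the EL of $\mu$, but the upper bound is delicate because $\mu$ may be atomic, so individual atoms of $\mu$ could in principle lie inside the set $\{f_j=1\}$ where the EL on $\{f_j<1\}$ gives no direct information; this is exactly where the quantitative estimate $|A_j|\leq m_j\to 0$ becomes decisive.
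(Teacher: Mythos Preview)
Your argument is correct, and the second half takes a genuinely different route from the paper. For the minimality of the limit $\mu$, the paper builds competitors $\tilde f_j$ by averaging a minimizing measure $\bar\mu$ over a grid of cubes of volume $m_j$, whereas you mollify $\nu$ and rescale; both are short and rely only on the continuity of $g$ and the compact support of $\bar\mu$ (resp.\ $\nu$), so there is no real difference in strength here. The substantive divergence is in proving~\eqref{incluconv}. The paper never touches the Lagrange multiplier: it fixes neighborhoods $V\subset U$ of $\spt\mu$, transfers the gap $\psi_\mu|_{U^c}-\psi_\mu|_V\geq\gamma$ to $\psi_{\mu_j}$ by uniform convergence, and then, assuming $f_j$ has mass in $U^c$, constructs a competitor $\hat f$ by moving a tiny amount of mass from $U^c$ into $V$, computing directly that $\E(\hat f)<\E(f_j)$. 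Your approach instead extracts the sharper intermediate fact $\lambda_j/m_j\to\E(\mu)$, using the clean observation that $|\{\psi_{f_j}<\lambda_j\}|\leq m_j$ forces points with $\psi_{\mu_j}\geq\tilde\lambda_j$ to exist arbitrarily close to any $x^*\in\spt\mu$; the contradiction then comes from the pointwise Euler--Lagrange inequality rather than from an energy comparison. Your route yields the extra information on $\tilde\lambda_j$ and avoids building a competitor; the paper's route is slightly more self-contained in that it does not need to rule out $\lambda_j=+\infty$ (which you implicitly do via finiteness of $\psi_{f_j}$) and does not require the global inequality $\psi_\mu\geq\E(\mu)$ on all of $\R^N$.
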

\begin{proof}
Since $g$ is continuous, thus locally bounded, Proposition~\ref{prop:diameter-bound} ensures that the supports of the densities $f_j$ are uniformly bounded. Therefore, the probability measures $\mu_j=m_j^{-1} f_j$ have uniformly bounded support and then, up to subsequences and translations, we have $\mu_j \weakstar \mu$ for some probability measure $\mu$ with bounded support.

Let now $\bar\mu$ be any minimizer of~(\ref{eq:problem-with-measures}), and for any $j$ let $\mathscr{P}_j$ be a partition of $\R^N$ made by pairwise disjoint cubes of volume $m_j$, and define the function $\tilde f_j$ as
\[
\tilde f_j(x) = \bar\mu(Q)\qquad \forall\, Q\in\mathscr{P}_j, \forall\, x\in Q\,.
\]
By construction, $0\leq \tilde f_j\leq 1$ and that $\|\tilde f_j\|_1=m_j$, so by the minimality of $f_j$ we have $\E(f_j)\leq \E(\tilde f_j)$. The continuity of $g$ easily guarantees that $\E(m_j^{-1} \tilde f_j) \to \E(\bar\mu)$, and then also by the lower semicontinuity of $\E$ we deduce
\[
\E(\mu) \leq \liminf \E(\mu_j) = \liminf m_j^{-2}\E(f_j) 
\leq \liminf m_j^{-2}\E(\tilde f_j)
=\liminf \E(m_j^{-1} \tilde f_j)= \E(\bar\mu)\,.
\]
Hence, $\mu$ is a minimizer of $\E$ in $\P(\R^N)$.\par

Suppose now that $\psi_{\mu}>\E(\mu)$ outside of $\spt\mu$, and keep in mind that $\psi_\mu=\E(\mu)$ on $\spt\mu$ by Proposition~\ref{prop:EL}. Since $g$ is continuous and explodes at infinity, and since $\mu$ has bounded support, we deduce that the potential $\psi_\mu$ is continuous and explodes at infinity, thus for any $\eps>0$ there exists $\gamma>0$ such that $\psi_{\mu}(x)\geq \E(\mu)+\gamma$ whenever $\dist(x,\spt\mu)\geq \eps$. Let us now call $U=\spt\mu+B_{\eps}$ and $V = \spt\mu+B_\delta$, with $\delta$ so small that $\psi_{\mu}(x)<\psi_{\mu}(y)-\gamma/2$ for any $x\in V$ and any $y\in U^c$. Since $g$ is continuous and $\spt \mu_j$ are uniformly bounded, then $\psi_{\mu_j}$ are locally uniformly continuous with a common modulus of continuity. The convergence $\mu_j\weakstar \mu$ guarantees then that $\psi_{\mu_j}$ converge pointwise to $\psi_{\mu}$, and thanks to the common modulus of continuity this convergence is locally uniform. Therefore, if $j$ is large enough we have that
\begin{equation}\label{estigamma3}
\psi_{\mu_j}(x)<\psi_{\mu_j}(y)-\frac{\gamma}{3}\qquad\forall x\in V,\ y\in U^c\,.
\end{equation}
Suppose now by contradiction that~(\ref{incluconv}) does not hold true, thus that for some arbitrarily large $j$ the function $f_j$ is not concentrated on $U$. Then, for every $\eta \ll 1$, we can define a modified function $0\leq \hat f\leq 1$ by ``moving a mass $\eta$ from $U^c$ to $V$''. Formally speaking, $\hat f$ is a function such that $0 \leq \hat f\leq f_j$ on $U^c$ while $f_j\leq \hat f\leq 1$ on $V$, and so that
\[
\int_V \hat f - f_j = \int_{U^c} f_j - \hat f = \eta\,.
\]
The existence of such a function $\hat f$ is obvious as soon as $|V|\leq m_j$, which is certainly true for $j$ large enough. We can then call $\hat \mu=m_j^{-1}\hat f$, and $\nu = \mu_j - \hat \mu= m_j^{-1}(f_j-\hat f)$, so that $\|\nu\| = 2m_j^{-1}\eta$. So, we estimate
\[
\E(\hat\mu)-\E(\mu_j) = \E(\nu)+2\E(\mu_j,\nu)
=\E(\nu)+2\int_{\R^N} \psi_{\mu_j}(x)d\nu(x)
\leq C \|\nu\|^2 - \frac \gamma 3\, \|\nu\|\,,
\]
where we have used~(\ref{estigamma3}) and the fact that $g$ is continuous and the support of $\nu$ is bounded. For $\eta\ll 1$ this gives $\E(\hat \mu)<\E(\mu_j)$, thus $\E(\hat f)< \E(f_j)$, and this is impossible since $f_j$ is a minimizer of~\eqref{eq:problem-with-densities} and $\hat f$ is a competitor.
\end{proof}

\begin{theorem}\label{thm:existence-sets}
Let $g\in C^{2}(\R^N)$ be a definitively non-decreasing function such that $\lim_{|x|\to+\infty}g(x)=+\infty$. Let $f_j$ be a minimizer of~\eqref{eq:problem-with-densities} with $\norm{f_j}_1=m_j$ and any sequence $m_j\searrow 0$, and assume that $m_j^{-1}f_j\weakstar \mu\in\P(\R^N)$. If $\psi_{\mu}>\E(\mu)$ in $\R^N\setminus\spt \mu$ and for any $x\in\spt \mu$ there exists $v\in\S^{N-1}$ such that $\partial^{2}_v \psi_{\mu}(x)>0$, then $f_j$ is the characteristic function of a set when $j$ is large enough.
\end{theorem}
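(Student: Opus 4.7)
The approach is to argue by contradiction: assume $A_j := \{0 < f_j < 1\}$ has positive Lebesgue measure for infinitely many $j$. The Euler--Lagrange condition from Proposition~\ref{prop:EL} then forces $\psi_{f_j}$ to equal a constant $\lambda_j$ on $A_j$, and I will contradict this by producing a line segment on which $\psi_{f_j}$ is strictly convex yet attains $\lambda_j$ on a set of positive one-dimensional measure. The first ingredient is the transfer of the positive second derivative from $\psi_\mu$ to $\psi_{f_j}$. Since $g \in C^2(\R^N)$ and the $f_j$'s are bounded with supports contained in a fixed bounded set (combining Proposition~\ref{prop:diameter-bound} with~\eqref{incluconv} of Lemma~\ref{lemma:convergence}, whose hypothesis $\psi_\mu > \E(\mu)$ outside $\spt\mu$ is granted here), differentiation under the integral sign gives $\psi_{f_j} \in C^2(\R^N)$ with
\[
m_j^{-1}\partial^2_v \psi_{f_j}(x) = \int \partial^2_v g(x-y)\,d\mu_j(y)\,,
\]
where $\mu_j := m_j^{-1} f_j \weakstar \mu$. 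Since $\partial^2_v g$ is uniformly continuous on bounded sets, the right-hand side is equicontinuous in $x$ and converges pointwise to $\partial^2_v \psi_\mu(x)$, so the convergence is locally uniform.

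Next, I pick a Lebesgue density point $x_j \in A_j$ for each $j$ in the contradictory subsequence; as the supports of the $f_j$'s lie in a common bounded set, a further subsequence converges to some $x_* \in \spt\mu$, the inclusion $x_* \in \spt\mu$ coming from~\eqref{incluconv}. The hypothesis of the theorem furnishes $v \in \S^{N-1}$ with $2c := \partial^2_v \psi_\mu(x_*) > 0$, and continuity extends this to $\partial^2_v \psi_\mu \geq c$ on a ball $B(x_*, r_0)$. The locally uniform convergence above then yields, for $j$ large enough,
\[
\partial^2_v \psi_{f_j}(x) \geq \tfrac{c}{2}\, m_j > 0 \qquad \text{for every } x \in B(x_*, r_0)\,,
\]
and hence on $B(x_j, r_0/2) \subseteq B(x_*, r_0)$ as soon as $|x_j - x_*| < r_0/2$.

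To derive the contradiction, fix such a $j$ and a radius $0 < r < r_0/2$. Since $x_j$ is a density point of $A_j$, one has $|A_j \cap B(x_j,r)| > 0$. Fubini's theorem applied to the orthogonal projection $\pi \colon \R^N \to v^\perp$ then produces some $w \in v^\perp$ for which $I_w := \{t \in \R : w + tv \in B(x_j,r)\}$ is a non-degenerate interval and $T_w := \{t \in I_w : w + tv \in A_j\}$ has positive one-dimensional Lebesgue measure. The function $\phi(t) := \psi_{f_j}(w + tv)$ is $C^2$ on $I_w$ with $\phi'' > 0$ throughout, hence strictly convex, while the Euler--Lagrange condition gives $\phi \equiv \lambda_j$ on $T_w$. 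Since a strictly convex function on an interval attains each value at most twice, this contradicts positivity of the one-dimensional measure of $T_w$, so $|A_j| = 0$ for all $j$ large and $f_j$ takes values in $\{0,1\}$ almost everywhere. The delicate point is precisely this final coupling: combining the flatness of $\psi_{f_j}$ on $A_j$ with its strict convexity in the direction $v$ is only possible after passing to a Lebesgue density point and slicing by Fubini so that both pieces of information live on the same one-dimensional restriction.
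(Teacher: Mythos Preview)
Your proof is correct and follows essentially the same route as the paper's: both transfer the positive second-derivative condition from $\psi_\mu$ to $m_j^{-1}\psi_{f_j}$ via locally uniform convergence of the Hessians, and then use strict convexity along the distinguished direction $v$ to force the level set $\{\psi_{f_j}=\lambda_j\}\supseteq\{0<f_j<1\}$ to be Lebesgue-null. The only difference is organizational---the paper covers $\spt\mu$ by finitely many balls and treats all of $\spt f_j$ at once, whereas you argue by contradiction at a single Lebesgue density point---and your explicit Fubini/strict-convexity slice is precisely the step the paper compresses into the phrase ``clearly implies that each level set of $\psi_{f_j}$ has zero measure''.
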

\begin{proof}
By Lemma~\ref{lemma:convergence} we know that $\mu$ is a minimizer of~(\ref{eq:problem-with-measures}), and that~(\ref{incluconv}) holds. By assumption, for any $x\in\spt\mu$ there exists some $v_x\in\S^{N-1}$ such that $\partial^2_{v_x} \psi_\mu(x)>0$, and since $\partial^2 \psi_\mu$ is continuous (because $g$ is of class $C^2$ and $\mu$ has bounded support) there exist some $\delta_x,\, r_x>0$ such that $\partial_{v_x}^2 \psi_\mu(y)>2\delta_x$ for every $y\in B_{r_x}(x)$. By compactness, there are finitely many points $x_1,\, x_2,\,\ldots ,\, x_k \in \spt\mu$, corresponding directions $v_1,\, v_2,\, \ldots,\, v_k\in \S^{N-1}$, and two constants $\delta,\, r>0$ such that the balls $B_r(x_i)$ cover the whole $\spt\mu$, and one has
\begin{equation}\label{eq:positive-direction}
\partial^2_{v_i}\psi_{\mu}(y)>2\delta\qquad \forall\, i\in\{1,\ldots,k\}, \forall \,y\in B_r(x_i)\,.
\end{equation}
Since $\spt\mu$ is covered by the finitely many balls $B_r(x_i)$, there exists some $\eps>0$ such that the balls cover also $\spt\mu+ B_\eps$, thus also $\spt f_j$ for any $j$ large enough, by~(\ref{incluconv}). Moreover, since $g\in C^2(\R^N)$ and $m_j^{-1} f_j \weakstar \mu$, we have that $m_j^{-1} D^2 \psi_{f_j}$ converges to $D^2 \psi_\mu$ locally uniformly. Therefore, (\ref{eq:positive-direction}) holds also replacing $2\delta$ with $\delta$ and $\psi_\mu$ with $\psi_{f_j}$ for every $j$ large enough. This condition clearly implies that each level set of $\psi_{f_j}$ has zero measure. But the Euler-Lagrange condition~(\ref{eq:EL}) ensures that $\{0<f_j<1\}$ is contained in a single level set. We deduce then that the function $f_j$ has value $0$ or $1$ almost everywhere, thus it is a characteristic function.
\end{proof}

\begin{corol}\label{cor:existence-sets}
Let $g\in C^{2}(\R^N)$ be a definitively non-decreasing function such that $\lim_{|x|\to+\infty}g(x)=+\infty$. Suppose that, for any minimizer $\mu$ of $\E$ in $\P(\R^N)$, we have
\begin{enumerate}
\item $\psi_{\mu}>\E(\mu)$ in $\R^N\setminus\spt \mu$;
\item for any $x\in\spt \mu$ there exists $v\in\S^{N-1}$ such that $\partial^{2}_v \psi_{\mu}(x)>0$.
\end{enumerate}
Then, there exists $\overline m>0$ such that any $f_m$ minimizing~\eqref{eq:problem-with-densities} with $\norm{f_m}_1=m$ is the characteristic function of a set when $m< \overline m$.
\end{corol}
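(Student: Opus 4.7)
The plan is to deduce the corollary from Theorem~\ref{thm:existence-sets} by a contradiction argument, using Lemma~\ref{lemma:convergence} to extract a limiting measure $\mu$ which, by hypothesis, automatically satisfies assumptions~(1) and~(2).

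First, I would argue by contradiction: suppose the conclusion fails. Then there exists a sequence $m_j\searrow 0$ and, for each $j$, a minimizer $f_j$ of~\eqref{eq:problem-with-densities} with $\|f_j\|_1=m_j$ such that $f_j$ is not (equivalent to) a characteristic function. Since the hypotheses of Lemma~\ref{lemma:convergence} are satisfied ($g\in C^2(\R^N)$ is in particular continuous), I can apply that lemma to the sequence $f_j$. Up to extracting a subsequence and translating each $f_j$ (translations do not affect the energy, hence preserve minimality and the property of being or not a characteristic function), we obtain $m_j^{-1} f_j \weakstar \mu$ for some probability measure $\mu\in\P(\R^N)$ which minimizes~\eqref{eq:problem-with-measures}.

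Next, since $\mu$ is a minimizer of $\E$ over $\P(\R^N)$, the hypotheses of the corollary guarantee that $\mu$ satisfies both (1)~$\psi_\mu>\E(\mu)$ outside $\spt\mu$, and (2)~at every point of $\spt\mu$ there is a direction of strictly positive second derivative of $\psi_\mu$. These are precisely the conditions required by Theorem~\ref{thm:existence-sets}. Applying that theorem to the (translated) subsequence $\{f_j\}$, we conclude that $f_j$ is the characteristic function of a set for all $j$ sufficiently large. This contradicts our initial assumption and thereby establishes the existence of the desired threshold $\overline m>0$.

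There is essentially no hard step here; the corollary is a routine upgrade from ``some subsequence of minimizers converges and is eventually a characteristic function'' (Theorem~\ref{thm:existence-sets}) to ``all minimizers are characteristic functions for small enough $m$'', accomplished by contradiction plus subsequence extraction. The only point that requires a brief verification is that translating $f_j$ does not alter whether it is a characteristic function, which is immediate, and that the translation freedom granted by Lemma~\ref{lemma:convergence} is compatible with the contradiction hypothesis, which again is clear because the property ``$f_j$ is not a characteristic function'' is translation-invariant.
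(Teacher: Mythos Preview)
Your proof is correct and follows essentially the same approach as the paper: both argue by contradiction, extract a subsequence via Lemma~\ref{lemma:convergence} that converges (after translation) to a minimizing measure $\mu$, and then invoke Theorem~\ref{thm:existence-sets} to obtain the contradiction. The paper's version is slightly terser but the logic is identical.
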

\begin{proof}
We proceed by contradiction. If the thesis is false, there exists some sequence $m_j\searrow 0$ and densities $f_j$ which minimize~\eqref{eq:problem-with-densities} with mass $m_j$ and are not characteristic functions. Since, as already noticed in the proof of Lemma~\ref{lemma:convergence}, their supports are uniformly bounded, up to a translation and a subsequence we have that $m_j^{-1} f_j\weakstar \mu\in\P(\R^N)$. Since $\mu$ is a minimizer of $\E$ in $\P(\R^N)$ by Lemma~\ref{lemma:convergence}, our assumption ensures that we can apply Theorem~\ref{thm:existence-sets}, clearly obtaining a contradiction.
\end{proof}

\begin{remark}\label{rem:higher-derivatives}
We observe that the proofs of Theorem~\ref{thm:existence-sets} and Corollary~\ref{cor:existence-sets} work also if we have a function $g\in C^{2k}(\R^N)$ and, for a given $\mu$ that minimizes $\E$ in $\P(\R^N)$ (or any minimizer, in the corollary), we have that for every $x\in\spt\mu$ there exist $v\in\S^{N-1}$ and $j\in\{1,\ldots,k\}$ with $\partial_v^{2j}\psi_{\mu}(x)>0$.
\end{remark}

\bigskip

\subsection{More precise results for power-law kernels}

This section is devoted to discuss the situation in the special case of a function $g$ given by~(\ref{defprot}). Let us start with a couple of definitions. We define by $\Delta_N\coloneqq \{x_1,\ldots,x_{N+1}\}\subset\R^N$ the vertices of the standard regular $(N+1)$-gon centered at the origin and with mutual distance $1$. We call $H_N=\sqrt{\frac{N+1}{2N}}$ its height, and $C_N=\sqrt{\frac{N}{2N+2}}$ its circumradius. Moreover, we define
\begin{equation}\label{eq:simplex}
\mu_N \coloneqq \frac{1}{N+1}\sum_{i=1}^{N+1}\delta_{x_i}
\end{equation}
the probability measure which is uniformly distributed over the points of $\Delta_N$. We present now a geometric result which will provide us a positive bound on the second derivative of the potential.

\begin{lemma}\label{lemma:lower-bound-second-derivative}
The constant
\begin{equation}\label{eq:lower-bound-second-derivative}
K_N\coloneqq \min \left\{\sum_{i=1}^{N+1}\scal{v}{x_i-x_1}^2:v\in\S^{N-1}\right\}
\end{equation}
satisfies $K_N=1$ if $N=1$ and $K_N=1/2$ if $N\geq 2$.
\end{lemma}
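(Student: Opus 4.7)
My plan is to reduce the minimization to an explicit quadratic form in $v$ by exploiting two elementary properties of the regular simplex: the fact that the vertices sum to zero, and the isotropy of the matrix $\sum_i x_i x_i^T$. Both follow from the symmetries of $\Delta_N$.

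First I would expand the sum. Writing $Q(v) := \sum_{i=1}^{N+1}\langle v, x_i - x_1\rangle^2$, one gets
\[
Q(v) = \sum_{i=1}^{N+1}\langle v,x_i\rangle^2 - 2\langle v,x_1\rangle\sum_{i=1}^{N+1}\langle v,x_i\rangle + (N+1)\langle v,x_1\rangle^2.
\]
Since $\Delta_N$ is centered at the origin, $\sum_i x_i = 0$, so the cross term vanishes and
\[
Q(v) = \sum_{i=1}^{N+1}\langle v,x_i\rangle^2 + (N+1)\langle v,x_1\rangle^2.
\]

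Next I would compute $\sum_i \langle v,x_i\rangle^2 = v^T M v$ for $M := \sum_{i=1}^{N+1} x_i x_i^T$. The symmetry group of $\Delta_N$ acts transitively on the vertices by orthogonal transformations of $\R^N$ (and acts irreducibly, since $\Delta_N$ spans $\R^N$ and has no invariant proper subspace), so by Schur's lemma $M = c\,I_N$. Taking traces gives $cN = \sum_i |x_i|^2 = (N+1)C_N^2 = N/2$, hence $c = 1/2$. Therefore $\sum_i \langle v,x_i\rangle^2 = 1/2$ for every $v \in \S^{N-1}$, and
\[
Q(v) = \tfrac12 + (N+1)\langle v,x_1\rangle^2.
\]

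From this formula the conclusion is immediate. If $N\geq 2$, the hyperplane orthogonal to $x_1$ intersects $\S^{N-1}$, so choosing any unit $v \perp x_1$ gives $Q(v) = 1/2$, and this is clearly the minimum since the added term is non-negative; hence $K_N = 1/2$. If $N=1$, the only unit vectors are $v=\pm 1$ and $x_1 = \pm 1/2$ (the two points of $\Delta_1$ being at mutual distance $1$ and symmetric about the origin), so $\langle v,x_1\rangle^2 = 1/4$ and $Q(v) = 1/2 + 2\cdot 1/4 = 1$, giving $K_1=1$.

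The only step with any content is the isotropy $M = \tfrac12 I_N$; aside from Schur, one could equivalently derive it by noting that $\sum_i |x_i|^2 = N/2$ distributes equally among the $N$ coordinate directions by the orthogonal symmetry. Everything else is a direct computation.
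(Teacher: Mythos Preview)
Your proof is correct and somewhat cleaner than the paper's. Both establish the key isotropy fact $\sum_i \langle v,x_i\rangle^2 = \tfrac12$ for unit $v$, but by different means: you invoke Schur's lemma (the symmetry group of $\Delta_N$ commutes with $M=\sum_i x_i x_i^T$, and since $M$ is symmetric its eigenspaces are invariant, forcing $M=cI$), while the paper proves it by a bare-hands decomposition $v=v_1+v_2$ along and across the direction of $x_1$, observing that the resulting expression is affine in $|v_1|^2$ and then using symmetry among the $x_i$ to conclude it is constant. For the main sum, you exploit $\sum_i x_i=0$ to kill the cross term in one line and obtain the closed formula $Q(v)=\tfrac12+(N+1)\langle v,x_1\rangle^2$; the paper instead does a second explicit decomposition and appeals recursively to the isotropy fact in dimension $N-1$, arriving at the equivalent expression $(N|v_1|^2+1)/2$. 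Your route is shorter and more conceptual; the paper's is more self-contained (no representation theory) but more laborious. One small remark: your justification of irreducibility (``$\Delta_N$ spans $\R^N$ and has no invariant proper subspace'') is a bit elliptic --- spanning alone does not give irreducibility --- but the irreducibility of the standard $S_{N+1}$-representation on $\R^N$ is classical, and in any case the eigenspace argument for symmetric $M$ bypasses the need for Schur in full.
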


\begin{proof}
First of all, we claim that for every $N\geq 2$ and every $v\in\S^{N-1}$
\begin{equation}\label{casewithout}
\sum_{i=1}^{N+1} \scal v{x_i}^2 = \frac 12\,.
\end{equation}
To do so, we decompose $v=v_1+v_2$, where $v_2$ is the projection of $v$ onto the hyperplane $\Pi$ parallel to the face containing $x_2,\ldots,x_{N+1}$ and passing through the origin. We can write
\[
\sum_{i=1}^{N+1} \scal v{x_i}^2 = \sum_{i=1}^{N+1} \big(\scal {v_1}{x_i}+\scal{v_2}{x_i} \big)^2
=\sum_{i=1}^{N+1} \scal {v_1}{x_i}^2+\sum_{i=1}^{N+1} \scal{v_2}{x_i}^2 + 2\sum_{i=1}^{N+1} \scal{v_1}{x_i}\scal{v_2}{x_i}\,.
\]
Notice now that by definition $\scal{v_2}{x_1}=0$, and $\scal{v_1}{x_i}$ has the same value for each $i\geq 2$. Since $\sum_{i=1}^{N+1} x_i=0$, we deduce that the last sum vanishes. Moreover, notice that $|x_i|=C_N$, and the distance of any $x_i$ with $i\geq 2$ from the hyperplane $\Pi$ is $H_N-C_N$. Therefore 
\[\begin{split}
\sum_{i=1}^{N+1} \scal v{x_i}^2 &= |v_1|^2 C_N^2 +\sum_{i=2}^{N+1} \scal {v_1}{x_i}^2+\sum_{i=2}^{N+1} \scal{v_2}{x_i}^2 \\
&= |v_1|^2 \Big( C_N^2 + N \big(H_N-C_N\big)^2\Big) + \big( 1 - |v_1|^2\big)\sum_{i=2}^{N+1} \scal{\frac{v_2}{|v_2|}}{x_i}^2 \\
&= \frac {|v_1|^2 }2 + \big( 1 - |v_1|^2\big)\sum_{i=2}^{N+1} \scal{\frac{v_2}{|v_2|}}{x_i}^2 \,.
\end{split}\]
The last expression is linear with respect to $|v_1|^2$. Therefore, either it is constant, or it is minimized for $|v_1|=0$ or $|v_1|=1$. This means that, if the sum in~(\ref{casewithout}) is not constant, then it is minimized only if $v$ is either parallel or orthogonal to $x_1$. However, the same should be true also with any other $x_i$, and this is clearly impossible. We deduce then that the sum in~(\ref{casewithout}) is constant, and then it is enough to choose $|v_1|=1$ to deduce that the constant value is $1/2$, that is, (\ref{casewithout}) is proved.

Let us now consider the sum in~(\ref{eq:lower-bound-second-derivative}). We can assume that $N\geq 3$, since the cases $N=1,\,2$ are elementary computations. Arguing similarly as before, we get
\[
\begin{split}
\sum_{i=1}^{N+1}\scal{v}{x_i-x_1}^2 &= \sum_{i=2}^{N+1}\scal{v_1}{x_i-x_1}^2+\scal{v_2}{x_i-x_1}^2+2\scal{v_1}{x_i-x_1}\scal{v_2}{x_i-x_1}\\
&\hspace{-30pt}=NH_N^2|v_1|^2+\sum_{i=2}^{N+1}\scal{v_2}{x_i}^2-2H_N\scal{v_1}{\frac{x_1}{|x_1|}}\sum_{i=2}^{N+1}\scal{v_2}{x_i}\\
&\hspace{-30pt}=\frac{N+1}2\, |v_1|^2+\sum_{i=2}^{N+1}\scal{v_2}{x_i}^2+2H_N\scal{v_1}{\frac{x_1}{|x_1|}}\scal{v_2}{x_1}
=\frac{N+1}2\, |v_1|^2+\sum_{i=2}^{N+1}\scal{v_2}{x_i}^2\\
&\hspace{-30pt}=\frac{N+1}2\, |v_1|^2+\big(1-|v_1|^2\big)\sum_{i=2}^{N+1}\scal{\frac{v_2}{|v_2|}}{x_i}^2\,.
\end{split}
\]
Notice now that the projections of the points $x_i$ with $2\leq i\leq N$ on the $(N-1)$-dimensional hyperplane $\Pi$ are the vertices of the standard regular $N$-gon centered in the origin. Therefore, the property~(\ref{casewithout}) in dimension $N-1\geq 2$ ensures us that the value of the last sum in the above estimate is $1/2$, regardless of what $v_2$ is. Therefore, we have
\[
\sum_{i=1}^{N+1}\scal{v}{x_i-x_1}^2 = \frac{N+1}2\, |v_1|^2 + \frac{1-|v_1|^2}2 = \frac{N|v_1|^2+1}2 \,,
\]
and the minimum of this expression among all $v\in \S^{N-1}$ is clearly $1/2$. Therefore, the proof is completed.
\end{proof}

We can now present our main results for the power-law kernel $g$ given by~(\ref{defprot}).

\begin{theorem}\label{thm:existence-power-like}
Let $N\geq 2$ and let $g=g_2$ be defined by~(\ref{defprot}), with $\alpha>\beta\geq 2$, $\alpha\geq 4$ and $(\alpha,\beta)\neq (4,2)$. Then, if $m$ is small enough, every minimizer of~(\ref{eq:problem-with-densities}) is the characteristic function of some set $E_m$ which is then a minimizer of~(\ref{eq:problem-with-sets}). Moreover, $E_m$ consists of $N+1$ convex components, each of which is contained in a small neighborhood of a vertex of $\Delta_N$.
\end{theorem}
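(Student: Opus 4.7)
The strategy is to apply Corollary~\ref{cor:existence-sets} with the role of $\mu$ played by $\mu_N$, which by Theorem~\ref{thmDLM} of Davies--Lim--McCann is the unique (up to rigid motion) minimizer of~\eqref{eq:problem-with-measures} in the regime $\alpha\geq 4$, $\beta\geq 2$, $(\alpha,\beta)\neq(4,2)$. We must verify the two hypotheses of the corollary, and then derive the geometric description of $E_m$.

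For hypothesis (2), a direct Hessian computation at each vertex $x_j$, using $|x_j-x_i|=1$ for $i\neq j$, yields
\begin{equation*}
D^2\psi_{\mu_N}(x_j) = \frac{1}{N+1}\Big[D^2 g(0)+(\alpha-\beta)\sum_{i\neq j}(x_j - x_i)\otimes(x_j - x_i)\Big],
\end{equation*}
where $D^2 g(0)=0$ if $\beta>2$ and $D^2 g(0)=-I$ if $\beta=2$ (in which case $\alpha>4$). Lemma~\ref{lemma:lower-bound-second-derivative} shows the smallest eigenvalue of the sum is $K_N=1/2$ for $N\geq 2$, yielding a minimum eigenvalue $\frac{\alpha-\beta}{2(N+1)}$ or $\frac{\alpha-4}{2(N+1)}$ respectively for the Hessian --- strictly positive in both cases, so $D^2\psi_{\mu_N}(x_j)$ is positive definite at every vertex. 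For hypothesis (1), $\psi_{\mu_N}(x)>\E(\mu_N)$ for $x\notin\Delta_N$, Proposition~\ref{prop:EL} gives the weak inequality; if equality held at some $x_0\notin\Delta_N$, the perturbation $\mu_\epsilon = \mu_N + \epsilon(\delta_{x_0}-\delta_{x_j})$ (for $\epsilon\leq 1/(N+1)$) satisfies
\begin{equation*}
\E(\mu_\epsilon)=\E(\mu_N)+2\epsilon\bigl(\psi_{\mu_N}(x_0)-\psi_{\mu_N}(x_j)\bigr)-2\epsilon^2 g(x_0-x_j)=\E(\mu_N)-2\epsilon^2 g(x_0-x_j),
\end{equation*}
contradicting minimality whenever some vertex satisfies $|x_0-x_j|>t_* := (\alpha/\beta)^{1/(\alpha-\beta)}$ (the positive zero of $g$). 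The remaining case, when $x_0$ lies in the bounded set $\bigcap_j \overline{B_{t_*}(x_j)}$, is harder: here we appeal to the uniqueness up to rigid motion in Theorem~\ref{thmDLM} combined with a refined perturbation, or to DLM's computations, which essentially establish this strict positivity as part of their characterization of $\mu_N$.

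Once both hypotheses are verified, Corollary~\ref{cor:existence-sets} gives that every minimizer of~\eqref{eq:problem-with-densities} with small $m$ equals $\Chi{E_m}$ for some set $E_m$, which is thus a minimizer of~\eqref{eq:problem-with-sets}. Lemma~\ref{lemma:convergence} further gives, up to translation, $\spt E_m\subseteq B_\epsilon+\Delta_N$ with $\epsilon\to 0$ as $m\to 0$, and the weak-$\ast$ convergence $m^{-1}\Chi{E_m}\weakstar\mu_N$ forces mass $\sim m/(N+1)$ to accumulate near each vertex, so that $E_m$ splits into exactly $N+1$ non-empty components, one inside a small ball around each $x_j$. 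The Euler--Lagrange condition gives $E_m\subseteq\{\psi_{\Chi{E_m}}\leq \lambda_m\}$, and since $g\in C^2$ yields $m^{-1}D^2\psi_{\Chi{E_m}}\to D^2\psi_{\mu_N}$ locally uniformly, the positive definiteness of $D^2\psi_{\mu_N}(x_j)$ implies that $\psi_{\Chi{E_m}}$ is strongly convex on a fixed neighborhood of each $x_j$ for $m$ small; hence each component is a sublevel set of a strongly convex function, and so convex. The main technical obstacle is the verification of the strict inequality in hypothesis (1) in the regime where the suspected extra global minimum $x_0$ is close to every vertex, where the naive perturbation above does not suffice.
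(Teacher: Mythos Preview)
Your approach is essentially the paper's: verify the two hypotheses of Corollary~\ref{cor:existence-sets} for $\mu_N$, invoke it, then read off the convex $(N+1)$-component structure from the uniform convergence $m^{-1}D^2\psi_{f_m}\to D^2\psi_{\mu_N}$ and the Euler--Lagrange description of $E_m$ as a sublevel set. Your Hessian computation for hypothesis~(2) is correct and matches the paper's (the paper writes it in directional-derivative form via~\eqref{eq:directional-derivatives}, you in tensor form, but the conclusion $\lambda_{\min}\big(D^2\psi_{\mu_N}(x_j)\big)\geq\frac{(\alpha-\beta)K_N}{N+1}$ resp.\ $\frac{-1+(\alpha-2)K_N}{N+1}$ is identical). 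The convexity argument for the components is also the same.

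The only real discrepancy is hypothesis~(1), which you flag as ``the main technical obstacle''. It is not: the strict inequality $\psi_{\mu_N}>\E(\mu_N)$ on $\R^N\setminus\Delta_N$ is part of the Davies--Lim--McCann result. Theorem~\ref{thmDLM} as quoted in the introduction records only uniqueness, but the paper's proof cites the sharper statement \cite[Theorem~1.2,\,Corollary~1.4]{DLM2}, which gives precisely this strict saturation of the Euler--Lagrange inequality. So your perturbation argument with $\mu_\epsilon=\mu_N+\epsilon(\delta_{x_0}-\delta_{x_j})$ is unnecessary, and the ``remaining case'' $x_0\in\bigcap_j\overline{B_{t_*}(x_j)}$ that you could not close is already handled in the literature. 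Replace your attempted direct proof of~(1) by a one-line citation and the argument is complete and identical to the paper's.
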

\begin{proof}
With this choice of powers $\alpha,\,\beta$, we know by~\cite[Theorem~1.2,\, Corollary~1.4]{DLM2} that the measure $\mu_N$ defined in~\eqref{eq:simplex} is the only minimizer of $\E$ in $\P(\R^N)$, and that $\psi_{\mu_N}>\E(\mu_N)$ outside of $\spt \mu_N=\Delta_N$. We now want to compute the first and second derivatives of the function
\[
\psi_{\mu_N}=\frac 1{N+1}\, \sum_{i=1}^{N+1} \psi_{\delta_{x_i}}
\]
at the point $x_1$. First of all, notice that for a radial function $g:\R^N\to\R$ and any choice of $x\in\R^N,\, v\in\S^{N-1},\, t>0$ one has
\begin{equation}\label{eq:directional-derivatives}\begin{split}
\frac{d}{dt}\,g(x+tv) &= g'(|x+tv|)\,\frac{\scal{x}{v}+t}{|x+tv|}\,,\\
\restr{\frac{d^2}{dt^2}}{t=0}g(x+tv)  &= g''(|x|)\,\frac{\scal{x}{v}^2}{|x|^2}+\frac{g'(|x|)}{|x|}\,\left(1-\frac{\scal{x}{v}^2}{|x|^2}\right)\,.
\end{split}\end{equation}
So, with the function $g=g_2$, keeping in mind that $g'(1)=0$ and $g''(1)=\alpha-\beta$, we have for every $i\geq 2$ that
\[
\begin{split}
\partial^2_v \psi_{\delta_{x_i}}(x_1) &=g''(|x_i-x_1|)\,\frac{\scal{x_i-x_1}v^2}{|x_i-x_1|^2} + \frac{g'(|x_i-x_1|)}{|x_i-x_1|}\left(1-\frac{\scal{x_i-x_1}{v}^2}{|x_i-x_1|^2}\right)\\
&= \big(\alpha-\beta\big) \scal{x_i-x_1}v^2\,,
\end{split}
\]
while of course
\[
\partial^2_v \psi_{\delta_{x_1}}(x_1) = g''(0)\,.
\]
We have now to distinguish the cases $\beta=2$ and $\beta>2$. If $\beta>2$, then $g''(0)=0$ and then by Lemma~\ref{lemma:lower-bound-second-derivative}
\[
\partial^2_v \psi_{\mu_N}(x_1) = \frac 1{N+1}\sum_{i=2}^{N+1} \partial^2_v \psi_{\delta_{x_i}}(x_1)\geq \frac{(\alpha-\beta) K_N}{N+1}\geq \frac{\alpha-\beta}{2(N+1)}\,,
\]
so $\partial^2_v \psi_{\mu_N}(x_1)>0$ for every $v\in\S^{N-1}$. Instead, if $\beta=2$, then $g''(0)=-1$, and then
\begin{equation}\label{noticelater}
\partial^2_v \psi_{\mu_N}(x_1) = \frac 1{N+1}\bigg(-1 + \sum_{i=2}^{N+1} \partial^2_v \psi_{\delta_{x_i}}(x_1)\bigg) \geq \frac{-1+(\alpha-2)K_N}{N+1}\,.
\end{equation}
Since $K_N=1/2$ by Lemma~\ref{lemma:lower-bound-second-derivative} and $\alpha>4$ because we are considering $\beta=2$, then $-1+(\alpha-2)K_N>0$, hence again $\partial^2_v \psi_{\mu_N}>0$ for every $v\in\S^{N-1}$. The fact that any minimizer of problem~\eqref{eq:problem-with-densities} with $\norm{f_m}_1=m$ is given by a characteristic function $f_m=\Chi{E_m}$ is then ensured by Corollary~\ref{cor:existence-sets}. Moreover, we know that the sets $E_m$ converge to $\Delta_N$ in the Hausdorff sense by~(\ref{incluconv}) of Lemma~\ref{lemma:convergence}, and $m^{-1}D^2\psi_{f_m}$ converges to $D^2\psi_{\mu_N}$ locally uniformly as noticed in Theorem~\ref{thm:existence-sets}. As a consequence, $D^2\psi_{f_m}$ is strictly positive definite in a neighborhood of each point $x_i$ when $m$ is sufficiently small, and so the set $E_m\cap B_{1/2}(x_i)$ is convex for each $i$ because it coincides with the sublevel set of a convex function.
\end{proof}

\begin{remark}
The same result is true also if $N=1$ for $\alpha>\beta\geq 2,\, \alpha\geq 3$ and $(\alpha,\beta)\neq (3,2)$. The proof is exactly the same, the only difference is that the term in~(\ref{noticelater}) was strictly positive since $\alpha-2>2$ and $K_N=1/2$, while now it is strictly positive since $\alpha-2>1$ and $K_N=1$.
\end{remark}

\bigskip

In contrast, the next theorem shows that for certain choices of the parameters $\alpha$ and $\beta$ the minimizer of~(\ref{eq:problem-with-densities}) is the characteristic function of a set \emph{for all} values of $m$.

\begin{theorem}\label{thm:existence-radial}
Let $g$ be defined by~(\ref{defprot}) with $\beta=2$, $N\geq 2$ and $\alpha\in (2,4)$, or $\beta=2$, $N=1$ and $\alpha\in(3,4)$. Then, for every $m>0$ the minimizer $f_m$ of~(\ref{eq:problem-with-densities}) is the characteristic function of a radial set, which is either an annulus or a disk. In particular, as $m\searrow 0$, the set is an annulus which converges to a circle in Hausdorff distance.
\end{theorem}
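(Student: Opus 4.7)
My plan combines three ingredients: a chained Laplacian computation specific to $\beta=2$, the Euler--Lagrange condition of Proposition~\ref{prop:EL}, and the radial-symmetry result of Lopes~\cite{Lop}. The first step uses a double Lebesgue density-point argument to force $f_m$ to be a characteristic function; the second exploits a radial-monotonicity analysis to reduce $E_m$ to a single shell; the third reads off the small-$m$ behavior from Lemma~\ref{lemma:convergence} together with Theorem~\ref{thmDLM}.

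From $\Delta g(x)=(\alpha+N-2)|x|^{\alpha-2}-N$ one gets $\Delta\psi_{f_m}=(\alpha+N-2)\,w-mN$, where $w(x):=\int_{\R^N}|x-y|^{\alpha-2}f_m(y)\,dy$. In turn, the distributional Laplacian of the kernel of $w$ is $(\alpha-2)(\alpha+N-4)|x|^{\alpha-4}$, whose coefficient is strictly positive in both parameter regimes of the statement, and whose singularity at $0$ is locally integrable for $N\geq 2,\ \alpha>2$ or $N=1,\ \alpha>3$. Since $f_m$ is bounded with compact support by Proposition~\ref{prop:diameter-bound}, it follows that $w\in C^2(\R^N)$ and $\Delta w>0$ pointwise. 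To prove that $f_m$ is a characteristic function, I suppose for contradiction that $A:=\{0<f_m<1\}$ has positive Lebesgue measure: by~\eqref{eq:EL}, $\psi_{f_m}\equiv\lambda$ on $A$, and the standard fact \emph{(if $u\in C^2$ is constant on a measurable set, then $\Delta u=0$ at every Lebesgue density point of the set, by the expansion of the mean value of $u$ on small balls)} applied to $\psi_{f_m}$ yields $w=mN/(\alpha+N-2)$ on a set of measure $|A|>0$; applying the same fact to $w$ now contradicts $\Delta w>0$. Hence $f_m=\Chi{E_m}$.

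By Lopes, the minimizer is radial (up to translation), so $E_m$ is a radial set. Writing $\rho(r):=\psi_{f_m}(re_1)\in C^2([0,\infty))$ and using the radial form $(r^{N-1}w'(r))'=r^{N-1}\Delta w>0$ (or $w''>0$ directly for $N=1$), together with $w'(0)=0$, one gets that $w(r)$ is strictly increasing in $r$; hence $\Delta\psi_{f_m}(r)=(\alpha+N-2)w(r)-mN$ is strictly increasing in $r$ and tends to $+\infty$ as $r\to\infty$. Since $r^{N-1}\rho'(r)=\int_0^r s^{N-1}\Delta\psi_{f_m}(s)\,ds$ vanishes at $r=0$ and its integrand is strictly monotone in $s$ and eventually positive, $\rho$ is either strictly increasing on $(0,\infty)$ (the ball case) or strictly decreasing on $(0,r_*)$ and strictly increasing on $(r_*,\infty)$ for some $r_*>0$ (the annulus case). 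Combined with the inclusions $\{\psi_{f_m}<\lambda\}\subseteq E_m\subseteq\{\psi_{f_m}\leq\lambda\}$ coming from~\eqref{eq:EL}, this forces $E_m$, up to negligible sets, to be either a single ball or a single annulus.

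For the $m\searrow 0$ asymptotics, Lemma~\ref{lemma:convergence} and Theorem~\ref{thmDLM} (with its $N=1$ counterpart) yield, up to translation, $\spt f_m\subseteq \partial B_r+B_\eps$ for any $\eps>0$ and $m$ small; since $0\notin\partial B_r+B_\eps$ once $\eps$ is small, $E_m$ must be an annulus, and Hausdorff convergence to $\partial B_r$ follows from the two-sided inclusion provided by weak convergence. The main obstacle is the double density-point argument in the second paragraph: it relies on $C^2$ regularity of both $\psi_{f_m}$ and $w$, and on the sign condition $(\alpha-2)(\alpha+N-4)>0$ behind $\Delta w>0$, which is precisely what picks out the admissible parameter ranges in the hypothesis.
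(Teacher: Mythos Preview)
Your argument is correct and arrives at the same conclusion as the paper, but by a genuinely different route. The paper first establishes radial symmetry of $f_m$ via strict convexity of $\E$ among densities with fixed barycenter (citing~\cite[Section~4]{CP}), and then invokes \cite[Theorem~2.2]{DLM1} to obtain $\varphi'''(s)>0$ for the radial profile $\varphi(s)=\psi_{f_m}(se_1)$; from $\varphi'(0)=0$ and $\varphi'''>0$ one deduces that every level set of $\varphi$ contains at most two points, hence $\{0<f_m<1\}$ is negligible and the sublevel sets are intervals. Your approach instead exploits the algebraic identity $\Delta\psi_{f_m}=(\alpha+N-2)w-mN$ with $\Delta w>0$, and bootstraps via the Sobolev fact ``$u\in C^2$ constant on a positive-measure set $\Rightarrow D^2u=0$ a.e.\ there'' to rule out $|\{0<f_m<1\}|>0$ \emph{before} appealing to radial symmetry. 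This is a nice self-contained replacement for the black-box citation of~\cite{DLM1}, and it makes transparent why the parameter ranges $(\alpha-2)(\alpha+N-4)>0$ together with $|x|^{\alpha-4}\in L^1_{\loc}$ are exactly what is needed.

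Two small corrections. First, the ``mean-value expansion'' justification you give for the density-point fact does not quite work as stated (density~$1$ only yields $\fint_{B_r}(u-c)=o(r)$, not $o(r^2)$); the clean argument is the Sobolev chain rule applied twice: $u=c$ on $S\Rightarrow\nabla u=0$ a.e.\ on $S\Rightarrow D^2u=0$ a.e.\ on $S$. Second, in your radial analysis the integrand $s^{N-1}\Delta\psi_{f_m}(s)$ is \emph{not} strictly monotone when $\Delta\psi_{f_m}(0)<0$ and $N\geq 2$ (the product of an increasing nonnegative factor and a negative increasing factor need not be monotone); what is true, and what you actually need, is that this integrand changes sign at most once (from $-$ to $+$), because $\Delta\psi_{f_m}$ is strictly increasing and eventually positive. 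With that correction your deduction that $\rho'$ has at most one sign change, hence $E_m$ is a ball or an annulus, goes through. Finally, note that Lopes' result as cited in the paper is stated for $2<\alpha\leq 4$; for the $N=1$, $\alpha>4$ case you should cite the convexity from~\cite[Section~4]{CP} (as the paper does) rather than~\cite{Lop}.
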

\begin{proof}
Let us consider any $m>0$. Since $\alpha\in(2,4)$ and $\beta=2$, it is known that the energy $\E$ is strictly convex among the functions with barycenter in the origin, see for instance~\cite[Section~4]{CP}. This implies that there is only a single minimizer among the functions with barycenter in the origin, and thanks to the invariance of the energy by rotation we obtain that this minimal function has to be spherically symmetric. Since $f_m$ is spherically symmetric, and since $\alpha>2$ for $N\geq 2$ or $\alpha>3$ for $N=1$, \cite[Theorem~2.2]{DLM1} ensures that the potential $\psi_{f_m}$ has positive third derivative, that is, calling $\varphi(s)= \psi_{f_m}(s{\rm e}_1)$ one has $\varphi'''(s)>0$ for every $s>0$. Moreover, $\varphi'(0)=0$ because $\psi_{f_m}$ is regular and radial. This implies that all level sets of $\varphi$ are given by either one or two points, hence for every $\lambda\in\R$ the set $\{x\in\R^N:\, \varphi(|x|)=\lambda\}$ is negligible. Since Proposition~\ref{prop:EL} ensures that in the set where $0<f_m<1$ the potential has a constant value $\lambda$, this gives that the set $\{0<f_m<1\}$ is negligible, and this precisely means that $f_m$ is the characteristic function of some set $E_m$, which, in turn, is radial because so is $f_m$. Moreover, calling $I\subseteq \R$ the set such that $E_m= \{ x\in\R^N:\, |x|\in I\}$, again Proposition~\ref{prop:EL} ensures that $I=\{s\in\R:\, \varphi(s)<\lambda\}$ for some $\lambda\in\R$. Keeping again in mind that $\varphi'(0)=0$ and $\varphi'''(s)>0$ for all $s>0$, we have that the sublevel sets of $\varphi$ are all intervals, either of the form $(a,b)$ for some $0\leq a<b$, or of the form $[0,b)$ for some $b>0$. This means that $E_m$ is either an annulus or a disk. In particular, Lemma~\ref{lemma:convergence} ensures that $E_m$ is an annulus for $m\ll 1$, since it must converge in the Hausdorff sense to a circle for $m\searrow 0$. On the other hand, $E_m$ is surely a whole disk for $m\gg 1$.
\end{proof}

\bigskip

\subsection{Last results for general kernels}

We have seen that, by Theorem~\ref{thmDLM}, in the special case when $g$ is a power-law kernel of the form~(\ref{defprot}) for a suitable choice of the parameters $\alpha,\,\beta$, the unique minimizing measure is the purely atomic measure $\bar\mu$ uniformly distributed over the vertices of the regular $(N+1)$-gon $\Delta_N$. The goal of this last section is to show that minimality of such a measure does not necessarily require the particular form~(\ref{defprot}), but it can also be a consequence of more geometrical, general properties of $g$. Let us be more precise. If we assume, just to fix the ideas, that $g(0)=0,\, g(1)=-1$ and $g(t)>-1$ for every $t\neq 1$, then pairs of points in the support of an optimal measure have convenience to stay at distance $1$, but it is impossible that all pair of points have distance $1$ since every point of the support has distance $0$ from itself. It is reasonable to guess that in some cases the most convenient choice could be to have as many points as possible with mutual distance $1$, hence, with $N+1$ points in the vertices of a unit regular $(N+1)$-gon. In particular, one can imagine that this could happen whenever $g\approx -1$ only in a small neighborhood of $-1$, and $g$ is flat enough close to $0$. In this section, we are going to prove that it is indeed so. In order to present a simple proof of geometric flavour, we use highly non-sharp assumptions, and we write the proof in the planar case $N=2$ for simplicity of notations. The general case $N\geq 3$ does not require any different ideas. The only caveat is notational complication due to several indices. The final Remark~\ref{finalremark} discusses the case of higher dimensions with slight improvements of the constants.

The first result we present is a ``confinement result'', which says that if $g$ is close to $-1$ only close to $1$ and not so much negative otherwise, then an optimal measure must be supported in a union of $3$ small balls around the vertices of $\Delta_2$.

\begin{lemma}[Confinement around $\Delta_2$]\label{confinementlemma}
Let $g\in L^1_{\rm loc}(\R^2)$ be a radial, continuous function such that $g(0)=0$, $g(1)=\min g=-1$, and for some $\eta<1/64$ and $\xi<1/165$ one has
\begin{align*}
g(t)>-\eta \quad \hbox{for $t\in [0,3/2]\setminus (1-\xi,1+\xi)$}\,, && g(t)>0 \quad \hbox{for $t\geq 3/2$}\,.
\end{align*}
Then every minimizing measure $\mu$ is concentrated in the union of three sets with diameter less than $5\xi$ and mutual distance between $1-6\xi$ and $1+\xi$. More precisely, given any point in any of the three sets, its distance with each of the other two sets is between $1-6\xi$ and $1+\xi$.
\end{lemma}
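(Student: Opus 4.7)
Find three points of $\spt\mu$ forming an approximate equilateral triangle of side close to $1$; define clusters $K_1,K_2,K_3$ as the lens-shaped connected components of the pairwise intersections of the annuli $A(x_i):=\{y:|y-x_i|\in(1-\xi,1+\xi)\}$; show $\mu$ is concentrated on $K_1\cup K_2\cup K_3$ by first locating the mass on six lens regions (these three plus three around the reflections $x_i'$ of each vertex across the opposite edge), and then using the Euler--Lagrange identities $\psi_\mu(x_i)=\E(\mu)$ to kill the three reflected clusters.

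\textbf{Energy bound, annular concentration, and the triangle.} Testing $\E$ against the uniform measure on the vertices of a unit equilateral triangle gives $L:=\E(\mu)\le\tfrac19\bigl(3g(0)+6g(1)\bigr)=-\tfrac23$. For $x\in\spt\mu$ Proposition~\ref{prop:EL} says $\psi_\mu(x)=L$. Splitting $\R^2$ into $A(x)$, the remainder of the disk $\{|y-x|\le 3/2\}$ (where $g>-\eta$) and its exterior (where $g>0$), and using $g\ge-1$ on $A(x)$, one gets
\[
-\tfrac23\ge L=\psi_\mu(x)\ge-\mu(A(x))-\eta\bigl(1-\mu(A(x))\bigr),
\]
hence $\mu(A(x))\ge a:=(2/3-\eta)/(1-\eta)>1/2$. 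Now pick $x_1\in\spt\mu$; choose $x_2\in\spt\mu\cap A(x_1)$ (nonempty), and then $x_3\in\spt\mu\cap A(x_1)\cap A(x_2)$, which exists since $\mu(A(x_1)\cap A(x_2))\ge 2a-1>0$. All three pairwise distances $|x_i-x_j|$ lie in $(1-\xi,1+\xi)$. For $\{i,j,k\}=\{1,2,3\}$, let $K_i$ be the connected component of $A(x_j)\cap A(x_k)$ containing $x_i$; since the two unit circles around $x_j,x_k$ meet at angle close to $60^\circ$, an elementary trigonometric calculation yields $\diam K_i<5\xi$, while the triangle inequality gives $\dist(K_i,K_j)\in(1-6\xi,1+\xi)$.

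\textbf{Six lenses, then three.} Let $p_k$ be the $\mu$-mass of points lying in exactly $k$ of $A(x_1),A(x_2),A(x_3)$, so $p_1+2p_2+3p_3\ge 3a$. A direct geometric check shows that for $\xi<1/120$ the triple intersection $A(x_1)\cap A(x_2)\cap A(x_3)$ is empty (no point is at distance $\simeq 1$ from all three equilateral vertices, and the separation between candidate locations is bounded below independently of $\xi$). Hence $p_3=0$ and $p_2\ge 3a-1\ge 1-O(\eta)$, so $\mu$ places all but $O(\eta)$ of its mass on the six lens components of the pairwise intersections $A(x_i)\cap A(x_j)$; three of these are the $K_k$ (around $x_k$) and three are their mirrors $K_k'$ (around the reflection $x_k'$ of $x_k$ across edge $x_ix_j$). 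Denote the six cluster masses $m_i,m_i'$. Using the exact values $|x_i-x_j|\simeq 1$, $|x_i-x_j'|=1$ for $i\ne j$, $|x_i-x_i'|=\sqrt 3$, and $g(\sqrt 3)>0$ (since $\sqrt 3>3/2$), inserting the six-cluster decomposition into $\psi_\mu(x_i)=L$ rewrites as
\[
m_i+m_i'\bigl(1+g(\sqrt 3)\bigr)\le\tfrac13+O(\eta,\xi),\qquad i=1,2,3.
\]
Summing over $i$ and combining with $\sum_i(m_i+m_i')\ge 1-O(\eta,\xi)$ gives $g(\sqrt 3)\sum_i m_i'\le O(\eta,\xi)$; the numerical thresholds $\eta<1/64$, $\xi<1/120$ are chosen exactly so that this forces $\sum_i m_i'=0$. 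The residual mass outside the six lenses is controlled by the same bookkeeping and also vanishes, giving $\mu(K_1\cup K_2\cup K_3)=1$.

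\textbf{Main obstacle.} The quantitative heart of the argument is the bookkeeping of the $O(\eta,\xi)$ error terms in the potential identities $\psi_\mu(x_i)=L$: they arise from the drift of $|x_i-x_j|$ away from $1$, from the positive diameter of each lens cluster, and from the residual mass outside the six lenses. The bounds $\eta<1/64$, $\xi<1/120$ and the threshold $3/2$ in the hypothesis on $g$ are calibrated precisely so that, after summation, the cumulative error stays strictly below $\tfrac13 g(\sqrt 3)$; this preserves the sign of the decisive inequality $g(\sqrt 3)\sum_i m_i'\le\text{error}$, which is what rules out any mass at the reflected vertices.
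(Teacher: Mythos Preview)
Your argument has a genuine gap at the decisive step. The inequality $g(\sqrt 3)\sum_i m_i'\le O(\eta,\xi)$ cannot force $\sum_i m_i'=0$, because the hypotheses give \emph{no quantitative lower bound} on $g(\sqrt 3)$: one only knows $g(t)>0$ for $t\ge 3/2$, and $g(\sqrt 3)$ may be arbitrarily small (say $10^{-100}$) while all assumptions remain satisfied. For any fixed $\eta,\xi>0$ the right-hand side is a fixed positive number, so the inequality is perfectly compatible with $\sum_i m_i'$ of order $1$. The thresholds $\eta<1/64$, $\xi<1/120$ cannot be ``calibrated'' against $g(\sqrt 3)$ because the latter is not part of the data. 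The same obstruction blocks your claim that the residual mass outside the six lenses ``vanishes by the same bookkeeping'': the inclusion--exclusion only yields residual $\le 3\eta$, not zero, and no manipulation of the identities $\psi_\mu(x_i)=L$ alone closes this gap without further input.

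The paper supplies the missing input by first proving, as a separate preliminary step, that $\diam(\spt\mu)\le 3/2$. This uses only the \emph{qualitative} positivity $g(t)>0$ for $t\ge 3/2$ together with $g(0)=0$: if $x_1,x_2\in\spt\mu$ with $|x_1-x_2|>3/2$, one transfers mass between small neighborhoods of $x_1$ and $x_2$ and computes the second variation of $\E$, which is strictly negative because $g$ near $|x_1-x_2|$ strictly exceeds $g$ near $0$; this contradicts minimality. Once the diameter bound holds, the two connected components of each $A(x_i)\cap A(x_j)$ are at mutual distance $>3/2$, so one of them is automatically $\mu$-null---this kills your reflected lenses $K_k'$ with no appeal whatsoever to the size of $g(\sqrt 3)$. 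The remaining $O(\eta)$ residual is then eliminated by a different contradiction: for any $w\in\spt\mu$ not within $5\xi$ of one of the three vertices, one exhibits a companion point $v$ in the nearby cluster with $|v-w|>5\xi$ and shows, by a careful splitting of the potential, that $\psi_\mu(w)+\psi_\mu(v)>-4/3$, violating the Euler--Lagrange identity $\psi_\mu\equiv\E(\mu)\le -2/3$ on $\spt\mu$.
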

\begin{proof}
The assumptions on $g$ imply that its graph must be in the shaded region in Figure~\ref{Fig2}, left (a possible choice of $g$ is depicted just as an example). Let $\mu$ be an optimal measure. We divide the proof in few steps.
\step{I}{The diameter of $\spt\mu$ is at most $3/2$.}
Let us call $\bar\mu$ the measure which is uniformly distributed over the vertices of an equilateral triangle of side $1$. Then by minimality of $\mu$ we have
\begin{equation}\label{easyesti}
\E(\mu) \leq \E(\bar\mu) = -\frac 23\,.
\end{equation}
Assume now the existence of $x_1,\, x_2\in\spt\mu$ with $|x_1-x_2|>3/2$. For a small $r\ll 1$, that will be specified in few lines, we can take two measures $\mu_1,\, \mu_2 \leq \mu$ so that $\rho:=\|\mu_1\|=\|\mu_2\|>0$ and 
$\spt\mu_i\subset B_{r/2}(x_i)$. For every $-1<\eps<1$ we define $\mu_\eps = \mu + \eps (\mu_1-\mu_2)$, which is still a probability measure. We have
\[
\E(\mu_\eps) = \E(\mu) + 2 \eps \Big( \E(\mu,\mu_1)-\E(\mu,\mu_2)\Big)+ \eps^2 \Big( \E(\mu_1)+\E(\mu_2)-2\E(\mu_1,\,\mu_2)\Big)\,.
\]
However, keeping in mind Proposition~\ref{prop:EL} and the fact that $\mu_1\leq \mu$, we have
\[
\E(\mu,\mu_1) = \int \int g(x-y) \, d\mu_1(x) \, d\mu(y) = \int \psi_\mu (x) \,d\mu_1(x) =  \rho \,\E(\mu) \,,
\]
and similarly $\E(\mu,\mu_2) = \rho \,\E(\mu) $. Therefore, the above expression becomes
\begin{equation}\label{termneg}
\E(\mu_\eps) = \E(\mu) + \eps^2 \Big( \E(\mu_1)+\E(\mu_2)-2\E(\mu_1,\,\mu_2)\Big)\,.
\end{equation}
Since by assumption $g(0)=0<C := g(|x_1-x_2|)/2>0$, we can pick $r>0$ so small that
\[
g(s)< C < g(t) \qquad \hbox{for every $0\leq s \leq r$ and $|x_1-x_2|-2r\leq t \leq |x_1-x_2|+2r$}\,.
\]
We have then
\[
\E(\mu_1) = \int \int g(y-x) \,d\mu_1\, d\mu_1 < C  \rho^2 \quad \text{ and } \quad
\E(\mu_2) = \int \int g(y-x) \,d\mu_2\, d\mu_2 < C  \rho^2\,,
\]
while
\[
\E(\mu_1,\, \mu_2) = \int \int g(y-x) \,d\mu_1\, d\mu_2  > C \rho^2\,.
\]
This ensures that the term in parentheses in~(\ref{termneg}) is strictly negative, giving $\E(\mu_\eps)<\E(\mu)$ which contradicts the minimality of $\mu$. This concludes the step.

\step{II}{The sets $A_x,\, A_y$ and $Q_{x,y}$.}
Let us now fix any point $x\in \spt\mu$, and call
\[
A_x = \big\{ y:\, 1-\xi < |y-x|<1+\xi\big\}\
\]
the annulus centered at $x$ with radii $1-\xi$ and $1+\xi$. By~(\ref{easyesti}) and minding~(\ref{eq:EL}), we have
\[\begin{split}
-\frac 23 &\geq \E(\mu) = \psi_\mu(x) = \int_{A_x} g(y-x)\,d\mu(y) + \int_{\R^2\setminus A_x} g(y-x)\, d\mu(y)
\geq -\mu(A_x) -\eta(1-\mu(A_x))\\
&\geq -\mu(A_x)-\eta\,,
\end{split}\]
which can be rewritten as
\begin{equation}\label{>23}
\mu(A_x) \geq \frac 23 - \eta\,.
\end{equation}

\begin{figure}[htbp]
\begin{tikzpicture}[>=>>>,scale=1.5]
\fill[red!15!white] (0,1)--(0,-0.3)--(0.85,-0.3)--(0.85,-1)--(1.15,-1)--(1.15,-0.3)--(1.5,-0.3)--(1.5,0)--(2.3,0)--(2.3,1)--(0,1);
\fill (0,-0.3) circle (0.03);
\draw (0,-0.3) node[anchor=east] {$-\eta$};
\fill (0.85,0) circle (0.03);
\draw (0.6,0) node[anchor=south] {$1-\xi$};
\fill (1.15,0) circle (0.03);
\draw (1.4,0) node[anchor=south] {$1+\xi$};
\fill (0,-1) circle (0.03);
\draw (0,-1) node[anchor=east] {$-1$};
\draw[red, line width=1pt] (0,-0.3)--(0.85,-0.3)--(0.85,-1)--(1.15,-1)--(1.15,-0.3)--(1.5,-0.3)--(1.5,0)--(2.3,0);
\fill (1.5,0) circle (0.03);
\draw (1.5,0) node[anchor=north west] {$3/2$};
\draw[->, line width=0.8pt] (0,-1.5)--(0,1.2);
\draw[->, line width=0.8pt] (0,0)-- (2.5,0);
\fill (1,-1) circle (0.03);
\draw (0,0) .. controls (1.4,0.1) and (0.7,-1) .. (1,-1) .. controls (1.3,-1) and (0.7,0.4) .. (2,0.6) .. controls (2.3,0.6) and (2.2,1) .. (2.4,1.3);
\fill[white] (2.2,1) rectangle (2.5,1.3);
\fill[green!15!white] (7.5,-0.2) arc(0:360:1.5) -- (7.1,-0.2) arc(360:0:1.1);
\fill[green!15!white] (8.8,-0.2) arc(0:360:1.5) -- (8.4,-0.2) arc(360:0:1.1);
\fill[orange!65!white] (7.05,0.87) arc (45.5:64:1.5) arc(116:134.5:1.5) arc(76.5:54:1.1) arc(126:103.5:1.1);
\fill[orange!65!white] (7.05,-1.27) arc (-45.5:-64:1.5) arc(-116:-134.5:1.5) arc(-76.5:-54:1.1) arc(-126:-103.5:1.1);
\draw[line width=0.8pt] (6,-0.2) circle (1.5);
\draw[line width=0.8pt] (6,-0.2) circle (1.1);
\draw[line width=0.8pt] (7.3,-0.2) circle (1.5);
\draw[line width=0.8pt] (7.3,-0.2) circle (1.1);
\fill (6,-0.2) circle (0.03);
\draw (6,-0.2) node[anchor=north] {$x$};
\fill (7.3,-0.2) circle (0.03);
\draw (7.3,-0.2) node[anchor=north] {$y$};
\fill (6.75,0.9) circle (0.03);
\draw (6.75,0.9) node[anchor=east] {$z$};
\draw (9,0.9) node[anchor=east] {$A_y$};
\draw (4.3,0.9) node[anchor=west] {$A_x$};
\draw (6.65,1.3) node[anchor=south] {$Q_{x,y}$};
\end{tikzpicture}
\caption{Left: the graph of $g$ must be in the shaded region. Right: the points $x,\,y$ and $z$ and the sets $A_x,\,A_y$ and $Q_{x,y}$ in Step~II.}\label{Fig2}
\end{figure}
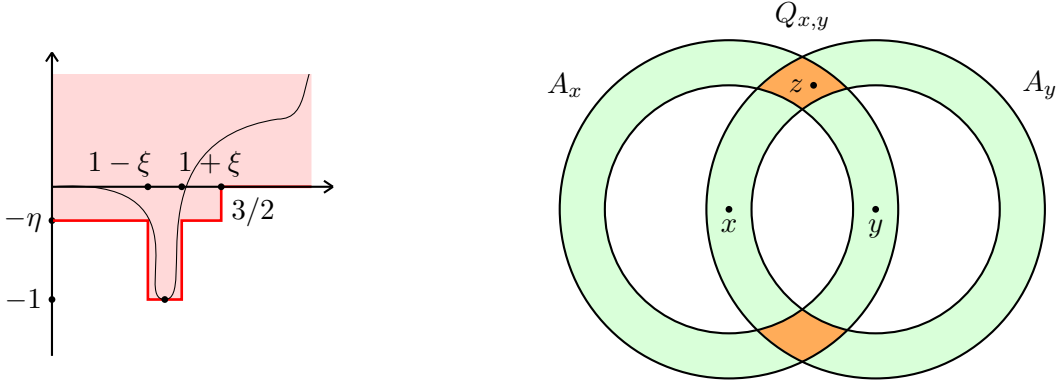

We can then take a second point $y$ in $\spt\mu$ so that $y\in A_x$, and then also $x\in A_y$. The intersection $A_x\cap A_y$ is made by two different connected pieces, orange in Figure~\ref{Fig2}, right. A trivial computation ensures that, by the assumption on $\xi$, the diameter of each piece is less than $5\xi$ and the distance between the two pieces is more than $3/2$. Step~I implies then that at least one connected piece of $A_x\cap A_y$ is $\mu$-negligible. On the other hand, applying~(\ref{>23}) both to $x$ and $y$ we obtain that $\mu(A_x\cap A_y)\geq 1/3-2\eta> 0$, and then exactly one connected piece of $A_x\cap A_y$ has positive $\mu$-measure. We call $Q_{x,y}$ this piece, so that, as just observed,
\begin{equation}\label{use0}
\mu(Q_{x,y}) \geq \frac 13 - 2\eta\,.
\end{equation}

\step{III}{The point $z$ and the conclusion.}
We can now define a third point $z\in \spt\mu\cap Q_{x,y}$, so that each of the annuli $A_x,\, A_y$ and $A_z$ centered at one of the points $x,\,y,\, z$ contains the other two points. Moreover, keeping the same notation as in Step~II, we call $Q_1=Q_{x,y}$, $Q_2=Q_{x,z}$ and $Q_3=Q_{y,z}$. Let now $w$ be any point in $\spt\mu$; since by Step~I we know that the distance between $w$ and any of the points $x,\,y,\,z$ is at most $3/2$, an immediate computation ensures that, thanks to the bound on $\xi$, the distance between $w$ and at least one of the points $x,\,y,\,z$ is less than $1-6\xi$. To fix the ideas, we can assume that
\begin{equation}\label{closetoz}
|z-w|< 1-6\xi\,.
\end{equation}
We assume then the existence of a point $v\in Q_1$ such that
\begin{equation}\label{contrass}
|v-w|>5\xi\,,
\end{equation}
and we look for a contradiction. Notice that this contradiction will conclude the proof; indeed, if (\ref{contrass}) is false for every $v\in Q_1$, there are some consequences. The first one is that the whole $\spt\mu$ is contained in the three balls of radius $5\xi$ centered at $x,\, y$ and $z$. Then, a second consequence is that the intersection of any of these balls with $\spt\mu$ has diameter at most $5\xi$, and thus $\mu$ is concentrated in the union of three sets with diameter less than $5\xi$. Moreover, by construction, for every point $a$ in one of these sets, the annulus $A_a$ intersects both the other two sets, and as a consequence the distance between $a$ and each of the other two sets is between $1-6\xi$ and $1+\xi$. Therefore, we only have to get a contradiction.\par

Let us write $\psi_\mu=\psi_1+\psi_2+\psi_3+\psi_\infty$, where we define
\begin{align*}
\psi_i(a)=\int_{Q_i} g(b-a)\,d\mu(b) \quad \forall\, i\in \{1,\,2,\,3\}\,, &&
\psi_\infty(a)=\int_{\R^2\setminus (Q_1\cup Q_2 \cup Q_3)} g(b-a)\, d\mu(b)\,.
\end{align*}
Notice now that, for every $b\in\spt \mu\cap Q_1$, since the diameter of $Q_1$ is less than $5\xi$ and by~(\ref{closetoz}) we have $|b-v|< 5\xi <1-\xi$ and $|b-w|<1-\xi$, and by the assumption on $g$ this implies
\begin{align}\label{use1}
\psi_1(w) > -\eta \mu(Q_1)\,, && \psi_1(v) > -\eta \mu(Q_1)\,.
\end{align}
Moreover, using~(\ref{>23}) also with $A_y$ and $A_z$ in place of $A_x$, and keeping in mind that by construction $Q_1\cap Q_2\cap Q_3=\emptyset$, we obtain that
\begin{align}\label{boundsmuQi}
\mu\big(\R^2\setminus(Q_1\cup Q_2\cup Q_3)\big)<  3\eta\,, && \mu(Q_1)\leq \frac 13 + \eta\,,
\end{align}
and the first bound immediately yields
\begin{equation}\label{use2}
\psi_\infty(w) + \psi_\infty(v) = \int_{\R^2\setminus (Q_1\cup Q_2 \cup Q_3)} g(b-w) + g(b-v)\, d\mu \geq -6\eta\,.
\end{equation}
Take now any two points $p\in Q_2,\, q\in Q_3$. As seen before, $A_p\cap A_q$ has diameter less than $5\xi$, so by the assumption~(\ref{contrass}) at least one between $v$ and $w$ does not belong to $A_p\cap A_q$, hence
\[
g(p-v)+g(p-w)+g(q-v)+g(q-w)\geq -3-\eta\,.
\]
Consequently, also by the second bound in~(\ref{boundsmuQi}) with $Q_2$ and $Q_3$ in place of $Q_1$, we have
\begin{equation}\label{use3}\begin{split}
\mu(Q_2)\big(\psi_3(w)&+\psi_3(v)\big) + \mu(Q_3)\big(\psi_2(w)+\psi_2(v)\big)\\
&=\int_{Q_2}\int_{Q_3} g(p-v)+g(p-w)+g(q-v)+g(q-w) \, d\mu(p)\,d\mu(q)\\
&\geq -(3+\eta) \mu(Q_2)\mu(Q_3)\geq -(3+\eta) \bigg(\frac 13+\eta\bigg)^2\,.
\end{split}\end{equation}
Again by~(\ref{easyesti}) and~(\ref{eq:EL}), and using~(\ref{use1}), (\ref{use2}), (\ref{use0}) and~(\ref{use3}) we have then
\[\begin{split}
-\frac 43 &\geq \psi_\mu(w)+\psi_\mu(v) = \psi_1(w)+\psi_2(w)+\psi_3(w)+\psi_\infty(w) +\psi_1(v)+\psi_2(v)+\psi_3(v)+\psi_\infty(v)\\
&> -2\eta \mu(Q_1)-6\eta + \bigg(\frac 13 - 2\eta\bigg)^{-1} \Big(\mu(Q_2)\big(\psi_3(w)+\psi_3(v)\big)+\mu(Q_3)\big(\psi_2(w)+\psi_2(v)\big)\Big)\\
&\geq -\frac 23\,\eta-2\eta^2 -6\eta -\bigg(\frac 13 - 2\eta\bigg)^{-1} (3+\eta) \bigg(\frac 13+\eta\bigg)^2\,,
\end{split}\]
and we derive the searched contradiction since this inequality is impossible for $\eta<1/64$.
\end{proof}

The main result of this section is that under suitable assumptions on the second derivative of $g$ around $0$ and around $-1$ the unique optimal measure is the purely atomic measure uniformly distributed over the vertices of a triangle of side $1$. More precisely, we have the following result:

\begin{theorem}\label{Thm3d}
Let $g$ be as in Lemma~\ref{confinementlemma}, and assume in addition that 
\[
g''(t)> -  12 \, g''(s) \qquad \forall\, t\in(0,5\xi),\, s\in(1-6\xi,1+6\xi)\,.
\]
Then, the unique optimal measure (up to translations and rotations) is the purely atomic one, uniformly distributed over the vertices of $\Delta_2$.
\end{theorem}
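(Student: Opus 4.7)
The plan is to combine Lemma~\ref{confinementlemma} with a second-order Taylor expansion that forces each of the three clusters to collapse to its barycenter, then optimize $\E$ over three-atom measures. Let $\mu$ be an optimal measure. By the confinement lemma, $\mu=\nu_1+\nu_2+\nu_3$ with $\nu_i$ concentrated in a set $S_i$ of diameter less than $5\xi$ and the mutual distances between the three clusters in $[1-6\xi,1+\xi]$; the argument inside the proof of that lemma also gives $m_i:=\nu_i(\R^2)\in[1/3-2\eta,\,1/3+\eta]$. Let $b_i$ be the barycenter of $\nu_i$ and set $\bar\mu:=\sum_i m_i\delta_{b_i}$. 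The central claim is that $\E(\mu)\geq\E(\bar\mu)$, with equality only when $\mu=\bar\mu$.

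For this claim, decompose
\[
\E(\mu)-\E(\bar\mu)=\sum_i\E(\nu_i)+\sum_{i\neq j}\bigl[\E(\nu_i,\nu_j)-m_im_jg(b_i-b_j)\bigr].
\]
Radiality of $g$ at the origin forces $g'(0)=0$; together with $g(0)=0$, the mean-value form of Taylor's theorem gives $g(x-y)=\tfrac12 g''(\tau_{xy})|x-y|^2$ with $\tau_{xy}\in(0,5\xi)$, whence $\E(\nu_i)\geq A\,m_iV_i$ with $A:=\inf_{(0,5\xi)}g''$ and $V_i:=\int|x-b_i|^2\,d\nu_i$. For each cross-term, Taylor-expand $g(x-y)=g((b_i-b_j)+w)$ in $w:=(x-b_i)-(y-b_j)$; the linear contribution vanishes by $\int(x-b_i)\,d\nu_i=0$, leaving
\[
\E(\nu_i,\nu_j)-m_im_jg(b_i-b_j)=\tfrac12\iint w^{\top}D^2g(\zeta)\,w\,d\nu_i\,d\nu_j
\]
with $|\zeta|$ within $O(\xi)$ of~$1$. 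Decomposing $D^2g(\zeta)$ into its radial eigenvalue $g''(|\zeta|)\geq B:=\inf_{(1-6\xi,1+6\xi)}g''$ and its tangential eigenvalue $g'(|\zeta|)/|\zeta|$ of size $O(\xi\cdot g''(1))$ (absorbed into a small remainder, since $g'(1)=0$), and summing over $i\neq j$, one obtains
\[
\E(\mu)-\E(\bar\mu)\geq \sum_i V_i\bigl[A\,m_i+c\,B(1-m_i)\bigr]+O(\xi)
\]
for a geometric constant $c>0$; the hypothesis $g''(t)>-\tfrac72 g''(s)$ is precisely calibrated so that each bracket is strictly positive throughout $m_i\in[1/3-2\eta,1/3+\eta]$, forcing $V_i=0$ for every $i$, i.e., $\mu=\bar\mu$.

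Once $\mu=\bar\mu$, one has $\E(\bar\mu)=\sum_{i\neq j}m_im_jg(b_i-b_j)\geq -\sum_{i\neq j}m_im_j=\sum_im_i^2-1$, using $g\geq -1$ with equality only at distance~$1$; minimizing $\sum_im_i^2$ subject to $\sum m_i=1$ gives $m_1=m_2=m_3=1/3$, and saturating $g(b_i-b_j)=-1$ at every pair forces the $b_i$ to be the vertices of a unit equilateral triangle, reproducing the measure $\mu_2$ of~\eqref{eq:simplex} up to rigid motion. The principal obstacle lies in the quantitative cross-term bookkeeping: one must track both the radial and the tangential contributions to $w^{\top}D^2g(\zeta)w$, control the range of $|\zeta|$ around~$1$ to within $O(\xi)$, and verify that after combining the three cluster contributions with masses in the confinement-lemma range, the resulting quadratic form in $(V_1,V_2,V_3)$ is strictly positive definite — it is exactly this computation that pins down the particular constant $7/2$ appearing in the hypothesis on~$g''$.
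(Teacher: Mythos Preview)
Your overall strategy --- second-order analysis to force each cluster to collapse to an atom, then optimize over three-atom measures --- matches the paper's. The barycenter framing is a clean repackaging of the paper's direct pairwise estimates on $g(x-z)+g(y-z)$. However, there is a genuine gap in the cross-term step, and it is precisely the step that generates the constant $7/2$.

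The Hessian $D^2g(\zeta)$ for $|\zeta|\approx 1$ has radial eigenvalue $g''(|\zeta|)\geq B$ but tangential eigenvalue $g'(|\zeta|)/|\zeta|=O(\xi B)$, possibly negative. Hence $w^\top D^2g(\zeta)\,w$ is bounded below only by $B\langle w,\hat\zeta\rangle^2$ plus lower order, and after integrating you control the variance of $\nu_i$ \emph{projected onto the direction} $\widehat{b_i-b_j}$, not the full $V_i$. Your displayed inequality with the full $V_i$ is therefore not obtainable from a single cross-term. What makes the argument work --- and this is the heart of the paper's proof --- is that summing over the \emph{two} clusters $j\neq i$, the directions $\widehat{b_i-b_j}$ make an angle close to $60^\circ$, so that $\langle\nu,\hat v_{i,j_1}\rangle^2+\langle\nu,\hat v_{i,j_2}\rangle^2\geq 2/5$ uniformly over unit vectors $\nu$ (this is the paper's bound $k_2+k_3\geq 2/5$). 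Only after this angular argument does the combined cross contribution dominate a multiple of the full $V_i$; the resulting effective constant $1/10$, combined with $m_i\leq 1/3+\eta$, is exactly what pins down the $7/2$. Your description of the obstacle as ``track both radial and tangential contributions'' misidentifies the difficulty: the tangential part is essentially absent, and the substance lies in the angular combination of radial pieces.

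A secondary issue: the ``$+\,O(\xi)$'' in your main inequality cannot be additive, since each $V_i$ is itself $O(\xi^2)$ and would be swamped; the remainder must be of the form $O(\xi)\sum_i V_i$, and keeping it in that form requires the same careful bookkeeping.
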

\begin{proof}
Let $\mu$ be an optimal measure. By Lemma~\ref{confinementlemma}, $\mu$ is concentrated on three sets $B_1,\, B_2,\, B_3$, with diameter less than $5\xi$ and mutual distance between $1-6\xi$ and $1+\xi$. Moreover, by~(\ref{use0}) and~(\ref{boundsmuQi}), each of them has measure between $\frac 13-2\eta$ and $\frac 13+\eta$. Let us call $C'=-\min \{g''(t) \colon 0\leq t \leq 5\xi\}$ and $C''=\min \{g''(t) \colon 1-6\xi\leq t \leq 1+6\xi\}$.\par

Let us take any four points $x,\,y,\, z,\, w$ in $\spt\mu$, in particular $x,\,y\in B_1$, $z\in B_2$ and $w\in B_3$. By construction and by Lemma~\ref{confinementlemma}, we have that
\begin{align*}
|y-x|\leq 5\xi\,, &&
|x-z|\leq 1+6\xi\,, &&
|x-w|\leq 1+6\xi \,, &&
|z-w|\geq 1-6\xi\,.
\end{align*}
Calling for brevity $\theta_{a,b}$ the direction of the vector $a-b$ for any two points $a\neq b\in\R^2$, the above estimates give
\begin{align}\label{generic}
\sin\big(|\theta_{x,z}-\theta_{y,z}|\big)\leq \frac{5\xi}{1-6\xi}\,, && \sin\bigg(\frac{|\theta_{x,z}-\theta_{x,w}|}2\bigg)\geq \frac 12\, \cdot\, \frac{1 -6\xi}{1+6\xi}\,.
\end{align}
Two elementary trigonometric estimates tell that, for a generic direction $\nu$,
\begin{equation}\label{deviation}\begin{split}
|\theta_{x,z} \cdot \nu|^2 + |\theta_{x,w}\cdot \nu|^2  &\geq 2 \sin^2\bigg( \frac {|\theta_{x,z}-\theta_{x,w}|}2\bigg)\,, \\ 
\Big||\theta_{x,z}\cdot \nu|^2 - |\theta_{y,z}\cdot \nu|^2\Big| &\leq \sin\big(|\theta_{x,z}-\theta_{y,z}|\big)\,.
\end{split}\end{equation}
In particular, we set $\nu=\theta_{x,y}$. Let us now consider the difference $\big||y-z|-|x-z|\big|$. By convexity of the distance, we can estimate this difference from below by $|y-x|$ multiplied either by $|\theta_{x,z}\cdot \nu|$ or by $|\theta_{y,z}\cdot \nu|$, unless the projection of $z$ onto the line passing through $x$ and $y$ is contained inside the segment $xy$, which means that $\theta_{x,z}$ and $\nu$ are very close to be perpendicular (and we discuss this case, which is in fact simpler, in a moment). We then have that
\[
\Big||y-z|-|x-z|\Big| \geq \min\Big\{ |\theta_{x,z}\cdot \nu| ,\,|\theta_{y,z}\cdot \nu| \Big\} \, |y-x|\,,
\]
which in turn yields
\begin{equation}\label{one}
g(x-z)+g(y-z) \geq -2 + \frac{C''}4\, \min\Big\{ |\theta_{x,z}\cdot \nu| ,\,|\theta_{y,z}\cdot \nu| \Big\}^2\, |y-x|^2 \,.
\end{equation}
We can now repeat the very same argument with $w$ in place of $z$.  Again unless $\theta_{x,w}$ is very close to be perpendicular to $\nu$, we have
\begin{equation}\label{two}
g(x-w)+g(y-w) \geq -2 + \frac{C''}4\, \min\Big\{ |\theta_{x,w}\cdot \nu| ,\,|\theta_{y,w}\cdot \nu| \Big\}^2\, |y-x|^2 \,.
\end{equation}
Putting together~(\ref{generic}) and~(\ref{deviation}), and in particular observing that the second estimate in~(\ref{generic}) holds also with $y$ in place of $x$ since $x$ and $y$ are generic points in $B_1$, we get that
\[\begin{split}
\min\Big\{ &|\theta_{x,z}\cdot \nu| ,\,|\theta_{y,z}\cdot \nu| \Big\}^2  + \min\Big\{ |\theta_{x,w}\cdot \nu| ,\,|\theta_{y,w}\cdot \nu| \Big\}^2\\
&\geq 2 \sin^2\bigg( \frac {|\theta_{x,z}-\theta_{x,w}|}2\bigg)- \Big||\theta_{x,z}\cdot \nu|^2 - |\theta_{y,z}\cdot \nu|^2\Big|
\geq  \frac 12\, \bigg(\frac{1 -6\xi}{1+6\xi}\bigg)^2-\frac{5\xi}{1-6\xi}\geq \frac 25\,,
\end{split}\]
where the last estimate is true by the assumption in Lemma~\ref{confinementlemma} that $\xi<1/165$. This last estimate together with~(\ref{one}) and~(\ref{two}) gives
\begin{equation}\label{anycase}
g(x-z)+g(y-z) + g(x-w)+g(y-w) \geq -4+ \frac{C''}{10} \, |y-x|^2 \,.
\end{equation}

Recall that~(\ref{anycase}) holds under the assumption that $\nu$ is not very close to be perpendicular to either $\theta_{x,z}$ or $\theta_{x,w}$. However, if this is the case then an even stronger estimate holds; in fact, if for instance $\nu$ is almost perpendicular to $\theta_{x,w}$, then we simply have
\[
\Big||y-z|-|x-z|\Big| +\Big||y-w|-|x-w|\Big| \geq \Big||y-z|-|x-z|\Big|
\geq \min\Big\{ |\theta_{x,z}\cdot \nu| ,\,|\theta_{y,z}\cdot \nu| \Big\} \, |y-x|\,,
\]
and since the minimum is close to $\sqrt 3/2$ because the triangle $xyz$ is nearly equilateral, the resulting estimate is stronger than~(\ref{anycase}). Hence, the validity of~(\ref{anycase}) is established in any case. Concerning $g(y-x)$, on the other hand, we have
\begin{equation}\label{estiC'}
g(y-x)\geq - \frac{C'}2\, |y-x|^2\,.
\end{equation}

Let us write $\psi^-=\psi_{\mu\smallres (B_2\cup B_3)}$, that is, $\psi^-(a)=\int_{B_2\cup B_3} g(a-b)\,d\mu(b)$. Using~(\ref{anycase}), and recalling that $g(x-z)+g(y-z)$ and $g(x-w)+g(y-w)$ are both surely greater than $-2$, we obtain
\begin{equation}\label{stmu12}\begin{split}
\psi^-(x)&+\psi^-(y) = \int_{B_2} g(x-z)+g(y-z)\, d\mu(z)  + \int_{B_3} g(x-w)+g(y-w)\, d\mu(w) \\
&\geq -2 \mu(B_2\cup B_3) + \frac{C''}{10}\, |y-x|^2 \min\big\{ \mu(B_2),\, \mu(B_3)\big\}\\
&\geq -2 \mu(B_2\cup B_3) + \frac{C''}{10}\, |y-x|^2 \bigg( \frac 13 - 2\eta\bigg)
\geq -2 \mu(B_2\cup B_3) + \frac{C''}{34}\, |y-x|^2\,.
\end{split}\end{equation}
We now evaluate $\E(\mu\res B_1,\mu)$, which by~(\ref{eq:EL}) coincides with $\mu(B_1) \E(\mu)$. We have
\[
\E(\mu\res B_1,\mu) = \int_{B_1} \int_{B_1} g(y-x)\,d\mu(y)\,d\mu(x)+ \int_{B_1}\int_{B_2\cup B_3} g(y-x)\,d\mu(y)\,d\mu(x)=:  \E_1+ \E_2\,.
\]
By~(\ref{estiC'}), we get
\[
\E_1 \geq -\frac{C'}2\, \int_{B_1} \int_{B_1} |y-x|^2\,d\mu(y)\,d\mu(x)\,.
\]
Instead, concerning $\E_2$, by~(\ref{stmu12}) we have
\[\begin{split}
\E_2 &= \int_{B_1} \psi^-(x)\,d\mu(x) = \frac 1{2\mu(B_1)}\, \int_{B_1} \int_{B_1} \psi^-(x)+\psi^-(y)\,d\mu(x) \, d\mu(y)\\
&\geq \frac 1{2\mu(B_1)}\, \int_{B_1} \int_{B_1} -2 \mu(B_2\cup B_3) + \frac{C''}{34}\, |y-x|^2 \,d\mu(x) \, d\mu(y)\\
&=  -\mu(B_1)\mu(B_2\cup B_3) + \frac{C''}{68 \mu(B_1)}\, \int_{B_1} \int_{B_1}\, |y-x|^2 \,d\mu(x) \, d\mu(y)\, .
\end{split}\]
Now, the assumptions
imply that $C''\geq 12\, C' \geq 34 \mu(B_1) C'$. Hence, from the two estimates above we get that
\[
\E(\mu\res B_1,\mu) \geq -\mu(B_1)\mu(B_2\cup B_3)\,,
\]
with strict inequality unless $\mu\res B_1$ is concentrated in a single point. Since the same estimate clearly works with $\mu\res B_2$ and $\mu \res B_3$ in place of $\mu\res B_1$, calling $c_i=\mu(B_i)$ for $i\in \{1,\,2,\,3\}$ and keeping in mind that $c_1+c_2+c_3=1$, we get
\[\begin{split}
-\E(\mu) &\leq c_1(c_2+c_3) + c_2(c_1+c_3) + c_3(c_1+c_2) = c_1(1-c_1) + c_2(1-c_2)+c_3(1-c_3)\\
&= 1 - \big(c_1^2+c_2^2+c_3^2\big) \leq \frac 23\,.
\end{split}\]
Since we already noticed that $\E(\mu)\leq - \frac 23$, we finally deduce that necessarily $c_1=c_2=c_3=\frac 13$ and each of the three measures $\mu\res B_i$ is concentrated in a single point. In addition, all the distances between any two of these three points must be equal to $1$, as claimed.
\end{proof}

\begin{remark}\label{finalremark}
In the general case of dimension $N\geq 3$, one can perform the very same construction as in Lemma~\ref{confinementlemma} and Theorem~\ref{Thm3d}, and obtain the very same results. More precisely, there are explicitly computable constants $\bar \eta,\,\bar\xi,\, c_1$ and $c_2$, only depending on the dimension, such that the following holds. If $g\in L^1_{\rm loc}(\R^N)$ is a radial, continuous function such that $g(0)=0$, $g(1)=\min g=-1$, and for some $\eta<\bar\eta$ and $\xi<\bar \xi$ one has $g(t)>-\eta$ for $t\in [0,\sqrt 3 H_N]\setminus (1-\xi,1+\xi)$ and $g(t)>0$ for $t\geq\sqrt 3 H_N$, then every minimizing measure is concentrated over the union of $N+1$ sets with diameter less than $c_1 \xi$ and mutual distance between $1-(1+c_1) \xi$ and $1+\xi$. In addition, if $g''(t)\geq - c_2 g''(s)$ for every $t\in (0, c_1\xi)$ and $s\in \Big(1-(1+c_1)\xi, 1+(1+c_1)\xi\Big)$, then the unique optimal measure is the purely atomic one, uniformly distributed over the vertices of $\Delta_N$.
\end{remark}

\bigskip
\subsection*{Acknowledgments}
D.C. is member of the Istituto Nazionale di Alta Matematica (INdAM), Gruppo Nazionale per l'Analisi Matematica, la Probabilit\`a e le loro Applicazioni (GNAMPA), and is partially supported by the INdAM--GNAMPA 2023 Project \textit{Problemi variazionali per funzionali e operatori non-locali}, codice CUP\_E53\-C22\-001\-930\-001. D.C. wishes to thank Virginia Commonwealth University for the kind hospitality received during his visit.

I.T.'s research was partially supported by a Simons Collaboration grant 851065, an NSF grant DMS 2306962, and through a visit to Universit\`{a} di Pisa where this project was initiated. He is grateful for the support and hospitality shown during this visit.

\bigskip

\end{document}